\title{Elementary spectral invariants and quantitative closing lemmas for contact three-manifolds}
\author{Michael Hutchings\footnote{Partially supported by NSF grant DMS-2005437.}}
\date{}
\newcommand{\mc}[1]{{\mathcal #1}}
\numberwithin{equation}{section}
\newtheorem{theorem}{Theorem}[section]
\newtheorem{theoremstar}[theorem]{Theorem*}
\newtheorem{theoremstarstar}[theorem]{Theorem**}
\newtheorem{proposition}[theorem]{Proposition}
\newtheorem{lemma}[theorem]{Lemma}
\newtheorem{lemma-definition}[theorem]{Lemma-Definition}
\theoremstyle{definition}
\newtheorem{definition}[theorem]{Definition}
\newtheorem{remark}[theorem]{Remark}
\newtheorem{example}[theorem]{Example}
\newtheorem{examples}[theorem]{Examples}
\newtheorem{convention}[theorem]{Convention}
\newcommand{\floor}[1]{\left\lfloor #1 \right\rfloor}
\newcommand{\eqdef}{\;{:=}\;}
\newcommand{\C}{{\mathbb C}}
\newcommand{\R}{{\mathbb R}}
\newcommand{\Z}{{\mathbb Z}}
\newcommand{\op}{\operatorname}
\newcommand{\Ker}{\op{Ker}}
\newcommand{\bpm}{\begin{pmatrix}}
\newcommand{\epm}{\end{pmatrix}}
\renewcommand{\epsilon}{\varepsilon}
\begin{document}

\maketitle

\begin{abstract}
In a previous paper, we defined an ``elementary'' alternative to the ECH capacities of symplectic four-manifolds, using max-min energy of holomorphic curves subject to point constraints, and avoiding the use of Seiberg-Witten theory. In the present paper we use a variant of this construction to define an alternative to the ECH spectrum of a contact three-manifold. The alternative spectrum has applications to Reeb dynamics in three dimensions. In particular, we adapt ideas from a previous joint paper with Edtmair to obtain quantitative closing lemmas for Reeb vector fields in three dimensions. For the example of an irrational ellipsoid, we obtain a sharp quantitative closing lemma.
\end{abstract}

\tableofcontents

\setcounter{tocdepth}{2}

\section{Introduction}
\label{sec:intro}

\subsection{Quantitative closing lemmas}
\label{sec:qclintro}

Let $Y$ be a closed oriented $3$-manifold. Recall that a {\em contact form\/} on $Y$ is a $1$-form $\lambda$ with $\lambda\wedge d\lambda > 0$ everywhere. Associated to $\lambda$ is the {\em Reeb vector field\/} $R$ characterized by $d\lambda(R,\cdot) = 0$ and $\lambda(R)=1$. A {\em Reeb orbit\/} is a periodic orbit of $R$, that is a map $\gamma:\R/T\Z\to Y$ for some $T>0$, modulo translations of the domain, such that $\gamma'(t)=R(\gamma(t))$. The Reeb orbit $\gamma$ is {\em simple\/} if $\gamma$ is an embedding; otherwise $\gamma$ is the $d$-fold cover of a simple Reeb orbit for  some integer $d>1$. We define the {\em (symplectic) action\/} $\mc{A}(\gamma)>0$ to be the period $T$.

Closing lemmas in this context are concerned with creating Reeb orbits via suitable modifications of the contact form. A recent breakthrough in this topic is due to Irie \cite{irie} who proved, in the language of his later paper \cite{iriestrong}, that every contact form on a closed three-manifold has the following ``strong closing property'': If $\mc{U}\subset Y$ is a nonempty open set, and if $f:Y\to\R$ is a nonnegative smooth function supported in $\mc{U}$ which does not vanish identically, then for some $\tau\in[0,1]$, the contact form $e^{\tau f}$ has a Reeb orbit intersecting $\mc{U}$. A remarkable feature of this result is that, unlike in the older $C^1$ closing lemma of Pugh \cite{pugh}, no special care is required in the choice of $f$ to obtain a Reeb orbit. Note also that the statement implies that one can find $\tau$ arbitrary small such that $e^{\tau f}$ has a Reeb orbit intersecting $\mc{U}$, although the action of such a Reeb orbit might be very large.

In this paper we prove quantitative refinements of the strong closing property. Roughly speaking, these are answers to the following question: Given a nonempty open set $\mc{U}\subset Y$ and given $L>0$, how much do we need to deform the contact form $\lambda$ in $\mc{U}$ in order to produce a Reeb orbit intersecting $\mc{U}$ with action at most $L$? To make precise sense of this question, we need the following definitions.

\begin{definition}
\label{def:posdef}
Let $(Y,\lambda)$ be a closed contact three-manifold and let $\mc{U}\subset Y$ be a nonempty open set. A {\em positive deformation\/} of $\lambda$ supported in $\mc{U}$ is a smooth one-parameter family of contact forms
\[
\left\{\lambda_\tau=e^{f_\tau}\lambda\right\}_{\tau\in[0,1]}
\]
with the following properties:
\begin{itemize}
\item
$f_0\equiv 0$.
\item
$f_\tau(y)=0$ for all $\tau\in[0,1]$ and $y\in Y\setminus\mc{U}$.
\item
$f_1\ge 0$, and $f_1$ does not vanish identically.
\end{itemize}
\end{definition}

We now introduce a notion of the ``size'' of a positive deformation. Recall that the {\em symplectization\/} of $(Y,\lambda)$ is the symplectic 4-manifold
\[
\left(\R\times Y, \omega = d\left(e^s\lambda\right)\right)
\]
where $s$ denotes the $\R$ coordinate. Recall also that if $(X,\omega)$ is any symplectic 4-manifold, then its {\em Gromov width\/}
\[
c_{\op{Gr}}(X,\omega)\in (0,\infty]
\]
is defined to the supremum of $a>0$ such that there exists a symplectic embedding
\[
B^4(a)\longrightarrow (X,\omega).
\]
Here $B^4(a)$ denotes the ball
\[
B^4(a) = \left\{z\in\C^2 \;\big|\; \pi|z|^2 \le a \right\}
\]
with the restriction of the standard symplectic form $\sum_{i=1}^2dx_i\,dy_i$ on $\R^4=\C^2$. 

\begin{definition}
Let $\left\{\lambda_\tau = e^{f_\tau}\lambda\right\}_{\tau\in[0,1]}$ be a positive deformation as in Definition~\ref{def:posdef}. 
Let
\[
M_{\lambda_1} = \left\{(s,y) \in\R\times Y \mid 0 < s < f_1(y)\right\}
\]
with the symplectic form $d\left(e^s\lambda\right)$ from the symplectization. Define the {\em width\/} of the positive deformation by
\[
\op{width}(\{\lambda_\tau\}) = c_{\op{Gr}}(M_{\lambda_1}) \in (0,\infty).
\]
\end{definition}

\begin{definition}
Let $(Y,\lambda)$ be a closed contact $3$-manifold and let $L>0$. Define
\[
\op{Close}^L(Y,\lambda) \in [0,\infty]
\]
to be the infimum of $\delta>0$ with the following property:
\begin{itemize}
\item
Let $\mc{U}\subset Y$ be a nonempty set and let $\{\lambda_\tau\}_{\tau\in[0,1]}$ be a positive deformation of $\lambda$ supported in $\mc{U}$ with $\op{width}(\{\lambda_\tau\})\ge \delta$. Then for some $\tau\in[0,1]$, the contact from $\lambda_\tau$ has a Reeb orbit intersecting $\mc{U}$ with action at most $L$.
\end{itemize}
\end{definition}

\begin{remark}
\label{rem:closezeroinfinity}
It is immediate from the definition that $\op{Close}^L(Y,\lambda)$ is a nonincreasing function of $L$.

If $L$ is less than the minimum action of a Reeb orbit of $(Y,\lambda)$, then $\op{Close}^L(Y,\lambda)=+\infty$. In this case, for any $\delta>0$, one can prove that $\op{Close}^L(Y,\lambda)>\delta$ by taking $\mc{U}=Y$ and $\lambda_\tau = e^{r\tau}\lambda$ where $r>0$ is sufficiently large with respect to $\delta$.

It is also possible to have $\op{Close}^L(Y,\lambda)=0$. This is equivalent to the statement that every point in $Y$ is contained in a Reeb orbit of action at most $L$.
\end{remark}

\begin{remark}
If $\op{Close}^L(Y,\lambda)$ is finite, then for any nonempty open set $\mc{U}$, and any positive deformation $\{\lambda_\tau\}$ supported in $\mc{U}$ with width at least $\op{Close}^L(Y,\lambda)$, there must exist $\tau\in[0,1]$ such that the contact from $\lambda_\tau$ has a Reeb orbit intersecting $\mc{U}$ with action at most $L$. This Reeb orbit could, for example, be obtained by ``closing up'' a trajectory of the Reeb vector field in $Y\setminus \overline{\mc{U}}$ from $\partial \mc{U}$ to itself, or a cycle of such trajectories. If there is no such trajectory taking time less than $\op{Close}^L(Y,\lambda)$, then the Reeb orbit created must be entirely contained in $\overline{\mc{U}}$. Compare the proof of Proposition~\ref{prop:BoxClose} below.
\end{remark}

The following theorem, proved in \S\ref{sec:qcl}, asserts that for any $(Y,\lambda)$, if $L$ is sufficiently large then $\op{Close}^L(Y,\lambda)$ is finite, in fact $O(L^{-1})$. This is what we mean by a ``quantitative closing lemma''. To state the theorem, recall that the contact volume is defined by
\[
\op{vol}(Y,\lambda) = \int_Y\lambda\wedge d\lambda.
\]

\begin{theoremstar}[general quantitative closing lemma]
\label{theoremstar:qcl}
Let $(Y,\lambda)$ be a closed contact three-manifold. Then
\[
\limsup_{L\to\infty}\left(L\cdot\op{Close}^L(Y,\lambda)\right) \le \op{vol}(Y,\lambda).
\]
\end{theoremstar}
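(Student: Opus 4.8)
The plan is to prove the theorem using the \emph{elementary spectral invariants} $c_k(Y,\lambda)\in(0,\infty)$, $k=1,2,\dots$, constructed in this paper as a substitute for the ECH spectrum, and in particular the following four of their properties, which will be established earlier in the paper. \textbf{(i) Spectrality:} $c_k(Y,\lambda)$ equals the total action $\sum_i m_i\mc{A}(\gamma_i)$ of some finite collection of Reeb orbits $(\gamma_i,m_i)$ of $\lambda$. \textbf{(ii) Monotonicity and scaling:} if $\lambda'=e^g\lambda$ with $g\ge 0$ then $c_k(Y,\lambda)\le c_k(Y,\lambda')$, and $c_k(Y,e^c\lambda)=e^c c_k(Y,\lambda)$ for a constant $c$; hence along a positive deformation $\{\lambda_\tau\}$ the function $\tau\mapsto c_k(Y,\lambda_\tau)$ is continuous and $c_k(Y,\lambda_1)\ge c_k(Y,\lambda)$. \textbf{(iii) Weyl law:} $\lim_{k\to\infty}c_k(Y,\lambda)^2/k=2\op{vol}(Y,\lambda)$. \textbf{(iv) Jump inequality:} for any positive deformation $\{\lambda_\tau=e^{f_\tau}\lambda\}$ of $\lambda$ and every $k\ge 1$,
\[
c_k(Y,\lambda_1)\ge c_{k-1}(Y,\lambda)+\op{width}(\{\lambda_\tau\}),\qquad c_0:=0.
\]
To see (iv), view $c_k(Y,\lambda)$ as the $k$-th elementary capacity of the region $\{(s,y)\mid s<0\}$ of the symplectization $(\R\times Y,d(e^s\lambda))$: since $f_1\ge 0$, this region and $M_{\lambda_1}$ are disjoint subsets of $\{(s,y)\mid s<f_1(y)\}$, whose $k$-th elementary capacity is $c_k(Y,\lambda_1)$; and $M_{\lambda_1}$ admits embedded symplectic balls of capacity arbitrarily close to $\op{width}(\{\lambda_\tau\})=c_{\op{Gr}}(M_{\lambda_1})$, which contribute that amount by monotonicity, the disjoint-union formula, and $c_1(B^4(a))=a$.

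Granting these, I would argue as follows. Fix $\epsilon>0$; it suffices to show $\op{Close}^L(Y,\lambda)\le(1+\epsilon)\op{vol}(Y,\lambda)/L$ for $L$ large. Put $\delta=(1+\epsilon)\op{vol}(Y,\lambda)/L$, let $\mc{U}\subset Y$ be nonempty and open, let $\{\lambda_\tau=e^{f_\tau}\lambda\}$ be a positive deformation supported in $\mc{U}$ with $\op{width}(\{\lambda_\tau\})\ge\delta$, and assume for contradiction that \emph{no} $\lambda_\tau$ has a Reeb orbit meeting $\mc{U}$ of action $\le L$. Then for each $\tau$, any Reeb orbit set of $\lambda_\tau$ of total action $\le L$ has all its orbits of action $\le L$, hence avoiding $\mc{U}$, hence equal to Reeb orbits of $\lambda$; consequently the set of values $\le L$ realized as total actions of Reeb orbit sets of $\lambda_\tau$ is a $\tau$-independent set $S$, contained in the action spectrum of $\lambda$, which has measure zero (it is discrete when $\lambda$ is nondegenerate; the general case is standard, or handled by approximation).

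Next I would locate a useful index. Let $k^\ast$ be the largest integer with $c_{k^\ast}(Y,\lambda)<L$; by (iii), as $L\to\infty$ we have $k^\ast\to\infty$, $\sqrt{2k^\ast\op{vol}(Y,\lambda)}=L(1+o(1))$, and $k^\ast=\tfrac{L^2}{2\op{vol}(Y,\lambda)}(1+o(1))$. \emph{Claim:} for $L$ large there is $k\le k^\ast$ with $c_k(Y,\lambda_1)>c_k(Y,\lambda)$. If not, then by (ii) $c_k(Y,\lambda_1)=c_k(Y,\lambda)$ for all $k\le k^\ast$, so (iv) gives $c_k(Y,\lambda)\ge c_{k-1}(Y,\lambda)+\delta$ for $1\le k\le k^\ast$. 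Choose $\eta>0$ small enough that $\sqrt{1-\eta}+\tfrac{1}{2}\eta(1+\epsilon)>1$, which is possible since the left side equals $1$ at $\eta=0$ and has derivative $\epsilon/2>0$ there; put $k_1=\lfloor(1-\eta)k^\ast\rfloor$. Summing over $k_1<k\le k^\ast$ and applying (iii) at $k_1$,
\[
L>c_{k^\ast}(Y,\lambda)\ge c_{k_1}(Y,\lambda)+(k^\ast-k_1)\delta=\Big(\sqrt{1-\eta}+\tfrac{1}{2}\eta(1+\epsilon)\Big)L(1+o(1)),
\]
which exceeds $L$ for $L$ large — a contradiction proving the Claim.

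Finally, with $k$ as in the Claim, set $\phi(\tau)=c_k(Y,\lambda_\tau)$: this is continuous by (ii), $\phi(0)=c_k(Y,\lambda)<L$, and $\phi(1)=c_k(Y,\lambda_1)>\phi(0)$. Let $[0,b]$ be the connected component containing $0$ of the closed set $\{\tau\mid\phi(\tau)\le L\}$. By (i) and the previous step, $\phi([0,b])\subseteq S$; being connected and contained in the measure-zero set $S$ it is a single point, so $\phi\equiv\phi(0)<L$ on $[0,b]$. Then $b$ lies in the interior of $\{\tau\mid\phi(\tau)<L\}$ unless $b=1$, forcing $b=1$ and $\phi$ constant on $[0,1]$, contradicting $\phi(1)>\phi(0)$. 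Hence some $\lambda_\tau$ has a Reeb orbit meeting $\mc{U}$ of action $\le L$, so $\op{Close}^L(Y,\lambda)\le(1+\epsilon)\op{vol}(Y,\lambda)/L$ for $L$ large, and letting $L\to\infty$ then $\epsilon\to 0$ gives the theorem. I expect the main obstacle to be establishing (iii) and (iv) for the elementary invariants — the bulk of the preceding part of the paper — while the delicate point in the argument above is the choice of window $(k_1,k^\ast]$, which makes the action gaps forced by (iv) incompatible with the Weyl law (iii) precisely when $\op{width}(\{\lambda_\tau\})>\op{vol}(Y,\lambda)/L$, yielding the sharp constant $\op{vol}(Y,\lambda)$.
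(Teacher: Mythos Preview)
Your proof is correct and follows essentially the same approach as the paper: the ingredients (i)--(iv) you list are exactly the Spectrality, Monotonicity/Conformality/$C^0$-Continuity, Asymptotics (Theorem*~\ref{theoremstar:asymptotics}), and Lemma~\ref{lem:gaplb} (with $l=1$, $X=B^4(\delta)$) from the paper, and your window calculation with $(k_1,k^\ast]$ and the function $\sqrt{1-\eta}+\tfrac{1}{2}\eta(1+\epsilon)$ is the same limit computation as in \S\ref{sec:qcl}. The only organizational difference is that the paper first abstracts the continuity--spectrality--jump argument into the standalone Spectral Gap Closing Bound $\op{Close}^L\le\op{Gap}^L$ and then bounds $\op{Gap}^L$, whereas you inline that argument directly into the contradiction; the content is the same.
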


\begin{convention}
In this paper we are avoiding using Seiberg-Witten theory when we can. To keep track of this, results that currently require Seiberg-Witten theory for their proof are marked with an asterisk. Results that currently also require Seiberg-Witten theory to know that their statement makes sense are marked with two asterisks. 
\end{convention}

\begin{example}
Theorem*~\ref{theoremstar:qcl} implies that in the setting of Irie's strong closing property above, given $\delta\in(0,1]$, for some $\tau\in[0,\delta]$, a Reeb orbit intersecting $\mc{U}$ must appear with action $\mc{O}(\delta^{-1})$. This is because one can show that if $\delta>0$ is small then the positive deformation $\{e^{\tau\delta f}\lambda\}$ has width at least $c\delta$, where $c>0$ is a constant depending on $f$. Compare \cite[Prop.\ 6.2]{pfh4}.
\end{example}

Here is another general statement one can make. Let $Y\subset\R^4$ be a compact star-shaped (i.e.\ transverse to the radial vector field) hypersurface. Then the standard Liouville form
\[
\lambda_0 = \frac{1}{2}\sum_{i=1}^2\left(x_i\,dy_i - y_i\,dx_i\right)
\]
restricts to a contact form on $Y$, which we omit from the notation.

\begin{theorem}
[proved in \S\ref{sec:starshaped}]
\label{thm:ballgap}
Let $Y$ be a compact star-shaped hypersurface in $\R^4$ and suppose that $Y\subset B^4(a)$. Then for every $L>0$ we have
\[
\op{Close}^L(Y) \le \frac{2a}{\floor{La^{-1}}+3}.
\]
\end{theorem}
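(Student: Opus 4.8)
The plan is to obtain Theorem~\ref{thm:ballgap} as a formal consequence of the basic properties of the elementary spectral invariants constructed earlier in the paper, which I will write $c_k(Y,\lambda)$ for $k\in\Z_{\ge 0}$ (the elementary analogue of the ECH spectrum; $\lambda$ here denotes the contact form induced on $Y$, and for a star-shaped hypersurface the $c_k(Y,\lambda)$ should recover the elementary capacities of the enclosed domain). The properties I expect to use are: (i) $c_0\equiv 0$ and $c_k$ is nondecreasing in $k$; (ii) if $Y$ bounds the compact domain $X_Y\subset\R^4$ then $c_k(Y,\lambda)=c_k(X_Y)$, this is monotone under inclusion of domains, and $c_k(B^4(a))=a\cdot\max\{d\ge 0\mid d(d+1)/2\le k\}$, the same values as the ECH capacities of the ball; (iii) a width lemma: for every positive deformation $\{\lambda_\tau\}$ supported in $\mc U$ and every $k\ge 1$ one has $c_k(Y,\lambda_1)\ge c_{k-1}(Y,\lambda)+\op{width}(\{\lambda_\tau\})$, obtained by embedding a ball of capacity just below $\op{width}(\{\lambda_\tau\})=c_{\op{Gr}}(M_{\lambda_1})$ into the collar $M_{\lambda_1}$, gluing it onto a filling of $(Y,\lambda)$, and using the disjoint-union inequality together with $c_1(B^4(w))=w$; and (iv) a closing property: if no $\lambda_\tau$ has a Reeb orbit meeting $\mc U$ of action at most $L$, then $c_k(Y,\lambda_1)=c_k(Y,\lambda)$ whenever $c_k(Y,\lambda)\le L$. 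Property (iv) is where the Reeb dynamics enters: it should follow from the facts that $c_k$ varies continuously with the contact form and, as long as it is $\le L$, is controlled by unions of Reeb orbits of total action $\le L$, which avoid $\mc U$ and hence are shared by all of the $\lambda_\tau$, so that the relevant values lie in a fixed discrete set and cannot change.

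Granting this, fix $L>0$ and $Y\subset B^4(a)$, let $X_Y$ be the compact domain bounded by $Y$, and set $m=\floor{La^{-1}}$ and $K=m(m+3)/2$; I will run the argument assuming $m\ge 1$. Suppose, for contradiction, that some positive deformation $\{\lambda_\tau\}$ of the contact form on $Y$, supported in a nonempty open set $\mc U\subset Y$, has $\op{width}(\{\lambda_\tau\})>2a/(m+3)$, yet no $\lambda_\tau$ has a Reeb orbit meeting $\mc U$ with action at most $L$. Since $X_Y\subset B^4(a)$, property (ii) gives
\[
c_k(Y,\lambda)=c_k(X_Y)\le c_k(B^4(a))\le c_K(B^4(a))=ma\le L \qquad\text{for } 0\le k\le K.
\]
Hence for each $k$ with $1\le k\le K$ the closing property (iv) applies, giving $c_k(Y,\lambda_1)=c_k(Y,\lambda)$, and combining this with the width lemma (iii),
\[
c_k(Y,\lambda)-c_{k-1}(Y,\lambda)=c_k(Y,\lambda_1)-c_{k-1}(Y,\lambda)\ge\op{width}(\{\lambda_\tau\}).
\]

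Summing over $k=1,\dots,K$ and using $c_0\equiv 0$,
\[
K\cdot\op{width}(\{\lambda_\tau\})\le c_K(Y,\lambda)\le ma,
\]
so $\op{width}(\{\lambda_\tau\})\le ma/K=2a/(m+3)$, contradicting the choice of deformation. Thus every positive deformation supported in a nonempty open set with width larger than $2a/(m+3)$ forces, for some $\tau$, a Reeb orbit of $\lambda_\tau$ meeting $\mc U$ with action at most $L$; since this is exactly the property required of every threshold $\delta>2a/(m+3)$ in the definition of $\op{Close}^L$, we conclude $\op{Close}^L(Y)\le 2a/(m+3)=2a/(\floor{La^{-1}}+3)$. (The computation requires $\floor{La^{-1}}\ge 1$; the remaining small-$L$ range would be treated separately, and is of substance only once $L$ reaches the minimal Reeb action of $Y$.)

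The delicate part, I expect, is not the combinatorics but properties (iii) and (iv): (iv) requires the precise continuity of the max-min invariants $c_k$ along a family of contact forms agreeing off $\mc U$, together with the control of their sub-$L$ values by Reeb orbit sets; and (iii) requires the correct monotonicity and disjoint-union behaviour of the construction applied to the cobordism glued from the collar $M_{\lambda_1}$. If both are already in place from the preceding sections, Theorem~\ref{thm:ballgap} reduces to the computation above, where the only inefficiency is the inclusion $X_Y\subset B^4(a)$ and where taking $K$ to be the largest index with $c_K(B^4(a))=ma$ is exactly what makes $2a/(m+3)$ the sharpest constant this argument yields.
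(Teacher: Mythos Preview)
Your argument is correct and is essentially the paper's own proof: the paper packages your properties (iii) and (iv) into the Spectral Gap Closing Bound $\op{Close}^L\le\op{Gap}^L$ (proved in Theorem~\ref{thm:basicproperties}), and then performs the identical averaging $\op{Gap}^L\le c_K/K\le ma/K=2a/(m+3)$ with the same choice $K=m(m+3)/2$, whereas you unpack that bound inline via a contradiction. Two minor remarks: the paper only proves $c_k(Y,\lambda)\le c_k^{\op{Alt}}(X_Y)$ rather than equality (but you only use the inequality), and your caveat about $m=\lfloor L/a\rfloor\ge 1$ is well taken---the paper's proof tacitly assumes this too.
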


In specific examples one can say more if one can compute the alternative spectral invariants introduced in \S\ref{sec:ase} below. Here is an example where we can determine $\op{Close}^L(Y,\lambda)$ exactly. Let $a>1$ be irrational and consider the ellipsoid
\[
E(a,1) =\left\{z\in\C^2 \;\bigg|\; \frac{\pi|z_1|^2}{a }+ \pi|z_2|^2 \le 1\right\}.
\]
Then $\partial E(a,1)$ is a star-shaped hypersurface. The following result shows that $\op{Close}^L(\partial E(a,1))$ is related to how well $a$ can be approximated by rational numbers.

Fix $L\ge a$. Let $m_-,n_-$ be relatively prime integers with $m_->0$ such that $n_-/m_-$ is maximized subject to the constraints  $n_-/m_-<a$ and $am_-\le L$. Similarly, let $m_+,n_+$ be relatively prime integers with $m_+>0$ such that $n_+/m_+$ is minimized subject to the constraints  $n_+/m_+>a$ and $n_+\le L$.

\begin{theorem}[proved in \S\ref{sec:ellipsoid}]
\label{thm:ellipsoid}
If $a>1$ is irrational and $L\ge a$, then with the notation as above, we have
\[
\op{Close}^L(\partial E(a,1)) = \min(am_--n_-, \; n_+-am_+).
\]
\end{theorem}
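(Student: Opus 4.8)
The plan is to derive the upper bound from the quantitative closing lemma of \S\ref{sec:qcl}, expressed through the alternative spectral invariants of \S\ref{sec:ase}, which are explicitly computable for an ellipsoid, and to match it with an explicit construction of near-optimal positive deformations for the lower bound. Recall first the Reeb dynamics of $\partial E(a,1)$: there are exactly two simple Reeb orbits, $\gamma_1$ of action $a$ and $\gamma_2$ of action $1$, every other Reeb orbit is an iterate of one of these, and $\partial E(a,1)\setminus(\gamma_1\cup\gamma_2)$ is foliated by two-tori carrying a linear flow of irrational slope (here $a\notin\Q$ is used), hence without further periodic orbits. Since $L\ge a>1$, a nonempty open set $\mc{U}$ is only a genuine constraint if it is disjoint from $\gamma_1\cup\gamma_2$: otherwise $\gamma_1$ or $\gamma_2$ is itself a Reeb orbit of action $\le L$ of every $\lambda_\tau$, meeting $\mc{U}$, so the closing condition holds trivially for that $\mc{U}$.

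For the upper bound I would first record the computation of the alternative spectral invariants of $\partial E(a,1)$, which I expect to agree with the ECH spectrum, namely with the nondecreasing rearrangement of $\{ma+n\mid m,n\ge 0\}$; the ellipsoid is toric, the relevant point-constrained holomorphic curves in its symplectization are explicit, and this parallels the classical computation of ECH capacities of ellipsoids. The closing lemma of \S\ref{sec:qcl} then bounds $\op{Close}^L(\partial E(a,1))$ above by the smallest gap between consecutive alternative spectral invariants whose larger endpoint is at most $L$. It remains to identify this smallest gap, which is an elementary Diophantine computation. A short gap of $\{ma+n\}$ below $L$ is either of ``type $1$'' — an ``$ma$-value'' $qa$ (with $qa\le L$) sitting just above an ``$n$-value'', of size $\min_{1\le j\le q}\{ja\}$ — or of ``type $2$'' — an ``$n$-value'' $p$ (with $p\le L$) sitting just above an ``$ma$-value'', of size $\lceil m_0 a\rceil-m_0 a$ with $\lceil m_0 a\rceil=p$. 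Minimizing, and using that passing to lowest terms only decreases such a gap, one finds the smallest type-$1$ gap is $am_--n_-$ and the smallest type-$2$ gap is $n_+-am_+$ (here the extremality and coprimality of $m_\pm,n_\pm$, together with the constraints $am_-\le L$ and $n_+\le L$, enter, via the standard fact that best one-sided rational approximations of $a$ satisfy $|qa-p|<1$ and have $|qa-p|$ strictly decreasing as the denominator grows). Type-$0$ gaps have size $1$ and hence do not compete. This yields $\op{Close}^L(\partial E(a,1))\le\min(am_--n_-,\;n_+-am_+)$.

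For the lower bound, given $\delta<\min(am_--n_-,\;n_+-am_+)$, I would construct a nonempty open set $\mc{U}\subset\partial E(a,1)\setminus(\gamma_1\cup\gamma_2)$ and a positive deformation of $\lambda$ supported in $\mc{U}$, of width $\ge\delta$, no time-$\tau$ form of which has a Reeb orbit of action $\le L$ meeting $\mc{U}$. The natural construction enlarges $E(a,1)=X_\Omega$ — with $\Omega$ the triangle with vertices $(0,0),(a,0),(0,1)$ — to a convex toric domain $X_{\Omega'}$ with $\Omega\subset\Omega'$, where $\Omega'$ agrees with $\Omega$ near the two vertices $(a,0),(0,1)$ (so $\gamma_1,\gamma_2$ and the contact form near them are untouched) and bulges outward across the edge of slope $-1/a$; the positive deformation interpolates between the two contact forms, $\mc{U}$ is the locus lying over the bulge, and $\op{width}(\{\lambda_\tau\})=c_{\op{Gr}}(X_{\Omega'}\setminus X_\Omega)$. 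Every Reeb orbit meeting $\mc{U}$ lies over a rational-conormal point of the bulged boundary, with action equal to the corresponding support value of $\Omega'$; using the extremality of $n_\pm/m_\pm$ one checks that conormals with $n/m$ strictly between $n_-/m_-$ and $n_+/m_+$ already have support value $>L$, so the bulge must be designed to also push the support values in the two remaining near-critical conormal directions past $L$, while keeping $c_{\op{Gr}}(X_{\Omega'}\setminus X_\Omega)$ arbitrarily close to $\min(am_--n_-,\;n_+-am_+)$; the maximal admissible bulge is governed precisely by how close $n_-/m_-$ and $n_+/m_+$ come to $a$ under their complexity bounds, and one also verifies the same property for each intermediate domain between $\Omega$ and $\Omega'$.

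I expect the lower bound to be the main obstacle, for two linked reasons: a naive bulge creates unwanted short Reeb orbits through $\mc{U}$, so one must choose the bulge so as to simultaneously maximize $c_{\op{Gr}}(X_{\Omega'}\setminus X_\Omega)$ and keep every newly created Reeb orbit meeting $\mc{U}$ of action $>L$ at all times $\tau$, which demands sharp control of both the Gromov width of the bulge region and of the Reeb dynamics of the deformed contact forms (including ruling out orbits not of visibly toric type at intermediate times). A secondary, more routine obstacle is to pin down the alternative spectral invariants of ellipsoids; but for ellipsoids the relevant moduli spaces are explicit and this should follow exactly as for ECH capacities.
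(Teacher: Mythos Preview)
Your plan matches the paper's: upper bound via the Spectral Gap Closing Bound together with $c_k(\partial E(a,1))=N_k(a,1)$, and lower bound via a toric bulge deformation supported away from the two short orbits.

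Two refinements. For the upper bound you work harder than necessary: there is no need to classify gaps or identify the \emph{minimum} one. Since $am_-,\,n_-,\,am_+,\,n_+$ all occur in the sequence $(N_k(a,1))_{k\ge 0}$ and are all $\le L$, each of the positive differences $am_--n_-$ and $n_+-am_+$ is automatically $\ge\op{Gap}^L(\partial E(a,1))$, and that is already the bound. (Your type-$1$/type-$2$ dichotomy also tacitly assumes the larger endpoint of a minimal gap is a ``pure'' value $qa$ or $p$, which need not hold.)

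For the lower bound, the ingredient you have not named is what makes the Gromov width computation go through. The paper takes the bulge to approximate the triangle $T$ bounded by the hypotenuse together with the lines of slope $-m_+/n_+$ through $(0,1)$ and of slope $-m_-/n_-$ through $(a,0)$; the extremality of $(m_\pm,n_\pm)$ then forces every rational conormal appearing on the bulged boundary to have action $>L$, essentially as you anticipated. The key number-theoretic input is
\[
m_-n_+-m_+n_-=1:
\]
since no lattice point of the box $[0,L/a]\times[0,L]$ lies strictly between the rays through $(m_-,n_-)$ and $(m_+,n_+)$, Pick's theorem gives this. Hence $\begin{pmatrix}-m_-&-n_-\\-m_+&-n_+\end{pmatrix}\in\op{SL}_2(\Z)$, and together with a translation it carries $T$ to the right triangle with legs $am_--n_-$ and $n_+-am_+$; the Traynor trick then yields $c_{\op{Gr}}(\op{int}(X_T))\ge\min(am_--n_-,\,n_+-am_+)$. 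Finally, your concern about non-toric orbits at intermediate $\tau$ disappears once you keep every $\Omega_\tau$ a star-shaped toric domain satisfying the same slope constraints, as the paper does; all Reeb orbits are then visibly toric for every $\tau$.
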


Note that if $a>1$ is rational and $L\ge a$, then every point in $\partial E(a,1)$ is on a Reeb orbit of action $\le a$, so it follows from Remark~\ref{rem:closezeroinfinity} that $\op{Close}^L(\partial E(a,1))=0$.

\subsection{Obstructions to thickened Reeb trajectories}

We now discuss an application of quantitative closing lemmas, to show that long Reeb trajectories must come close to self-intersecting; more precisely, there are upper bounds on how much they can be ``thickened'' in the following sense.

\begin{definition}
Fix a closed contact three-manifold $(Y,\lambda)$. Given $A,L>0$, a {\em thickened Reeb trajectory\/} in $(Y,\lambda)$ of area $A$ and length $L$ is a smooth embedding
\[
\varphi:[0,L]\times D \longrightarrow Y,
\]
where $D$ is a closed disk of area $A$, such that
\[
\varphi^*\lambda = dt + \frac{1}{2}r^2\,d\theta.
\]
Here $t$ denotes the $[0,L]$ coordinate, and $(r,\theta)$ are polar coordinates on $D$.
\end{definition}

Some remarks on this definition: The Reeb vector field on the image of $\varphi$ is given by $\varphi_*\partial_t$. In particular, a thickened Reeb trajectory is a special kind of ``flow box'' for the Reeb vector field. Any embedded Reeb trajectory $[0,L]\to Y$ can be extended to a thickened Reeb trajectory for some $A>0$, by a Darboux-type argument similar to the proof of \cite[Thm.\ 2.5.1]{geiges}. The image of a thickened Reeb trajectory has contact volume $AL$, so necessarily
\[
AL \le \op{vol}(Y,\lambda).
\]

We can now ask if there are stronger upper bounds on $A$ in terms of $L$.

\begin{definition}
If $(Y,\lambda)$ is a closed contact three-manifold and $L>0$, define
\[
\op{Box}^L(Y,\lambda)\in\{-\infty\}\cup \left(0,\op{vol}(Y,\lambda)/L\right]
\]
to be the supremum of $A>0$ such that there exists a thickened Reeb trajectory in $(Y,\lambda)$ of area $A$ and length $L$.
\end{definition}

We have the following relation between thickened Reeb trajectories and quantitative closing lemmas, proved in \S\ref{sec:BoxClose} by a direct construction:

\begin{proposition}
\label{prop:BoxClose}
$\op{Box}^L(Y,\lambda) \le \op{Close}^L(Y,\lambda)$.
\end{proposition}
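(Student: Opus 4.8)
The plan is to deduce the inequality from the following claim: for every thickened Reeb trajectory $\varphi\colon[0,L]\times D\to Y$ of area $A$ and length $L$, and every $\delta$ with $0<\delta<A$, there are a nonempty open set $\mc{U}\subset Y$ and a positive deformation $\{\lambda_\tau\}$ supported in $\mc{U}$ with $\op{width}(\{\lambda_\tau\})\ge\delta$ such that no $\lambda_\tau$ has a Reeb orbit of action $\le L$ intersecting $\mc{U}$. Granting this, $\delta$ fails the property defining $\op{Close}^L(Y,\lambda)$; since that property is upward closed in $\delta$ (a deformation of width $\ge\delta'$ also has width $\ge\delta$ for $\delta'\ge\delta$), every $\delta'\le\delta$ also fails it, so $\op{Close}^L(Y,\lambda)\ge\delta$. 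Letting $\delta\uparrow A$ and taking the supremum over all $A$ realized by a thickened Reeb trajectory gives $\op{Close}^L(Y,\lambda)\ge\op{Box}^L(Y,\lambda)$ (and there is nothing to prove if $\op{Box}^L(Y,\lambda)=-\infty$).

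To build the deformation, take $\mc{U}=\varphi\big((0,L)\times\op{int}(D)\big)$, the interior of the flow box, and set $\lambda_\tau=e^{\tau f_1}\lambda$, where in the flow-box coordinates $(t,r,\theta)$ we put $f_1=\phi(t)\chi(r)\ge 0$ with $\phi$ a bump function supported in $(0,L)$ and $\chi$ nonincreasing, identically $1$ near $r=0$, and vanishing to infinite order at $r=r_0$; thus $f_1$ extends by zero to a smooth function supported in $\mc{U}$ that does not vanish identically, so $\{\lambda_\tau\}$ is a positive deformation. For the width, change coordinates in the symplectization over the flow box from $(s,t,r,\theta)$ to $(u,t,\rho,\theta)$ by $u=e^s$, $\rho=re^{s/2}$; a direct computation gives
\[
d(e^s\lambda)=d(e^s\,dt)+d\big(\tfrac12 e^sr^2\,d\theta\big)=du\wedge dt+\rho\,d\rho\wedge d\theta,
\]
the standard split symplectic form. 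In these coordinates $M_{\lambda_1}=\{1<u<e^{f_1}\}$ visibly contains a product $\{u_0<u<u_1,\ t_1<t<t_2\}\times\{\rho<\rho_1\}$ of a rectangle and a round disk, and when $\phi$ is taken tall enough — with tallness depending on $\delta$, on $L$, and on the radius out to which $\chi\equiv1$ — both factors have symplectic area $>\delta$. Since a planar rectangle is symplectomorphic to a round disk of the same area, and $B^4(\delta)$ embeds into the product of two round disks of area $\delta$, this yields a symplectic embedding $B^4(\delta)\hookrightarrow M_{\lambda_1}$, so $\op{width}(\{\lambda_\tau\})=c_{\op{Gr}}(M_{\lambda_1})\ge\delta$.

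For the no-short-orbit statement, note first that $f_1$ and $df_1$ vanish on the boundary of $\varphi([0,L]\times D)$, so the Reeb vector field $R_\tau$ of $\lambda_\tau$ equals $R$ there: it points into the flow box along $\{0\}\times D$, out of it along $\{L\}\times D$, and is tangent to $[0,L]\times\partial D$. Hence a Reeb trajectory of $\lambda_\tau$ can enter the flow box only through $\{0\}\times D$ and leave only through $\{L\}\times D$. Computing $R_\tau$ inside the box gives $dt(R_\tau)=e^{-\tau f_1}\big(1+\tfrac{\tau}{2}r\,\partial_rf_1\big)$, which is $\le 1$ because $f_1\ge0$ and $\partial_rf_1\le0$, and shows that $r^2e^{\tau f_1}$ is constant along Reeb trajectories. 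The bound $dt(R_\tau)\le1$ implies that any trajectory crossing the flow box from $\{0\}\times D$ to $\{L\}\times D$ spends time at least $L$ inside it — strictly more when $\tau>0$ and the trajectory meets $\{f_1>0\}$ — while for $\tau=0$ the orbit must additionally spend positive time outside the flow box, since the two end faces have disjoint images; in all these cases the orbit has action $>L$. It remains to treat trajectories that enter and leave the flow box through $\{0\}\times D$ and closed orbits of $\lambda_\tau$ contained in the flow box; both require $dt(R_\tau)$ to vanish somewhere, hence $\phi(t)\,|\partial_r\chi(r)|$ to be large there, and using the conservation law one checks that the simplest closed orbits are fiber circles $\varphi(\{t_*\}\times\{r=r_*\})$ of action $\pi r_*^2e^{\tau\phi(t_*)\chi(r_*)}$. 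One then arranges, by letting $\chi$ decay slowly across the radii $r$ with $\pi r^2<L$ — for which there is room precisely because the radius out to which $\chi\equiv1$ can be taken small — that the action of every such orbit, and of every orbit whose intersection with the flow box consists only of arcs returning to $\{0\}\times D$, exceeds $L$. With this, no $\lambda_\tau$ has a Reeb orbit of action $\le L$ meeting $\mc{U}$, proving the claim.

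I expect the delicate heart of the argument to be exactly this last point: controlling the Reeb dynamics of $\lambda_\tau$ near the support boundary of $f_1$, where $R_\tau$ need not be transverse to the disk slices, so that no short closed orbit appears and no "turn‑around" arc cheats the crossing‑time bound. Making that analysis succeed uniformly for every $\delta<A$ is precisely what dictates — and is the role of — the choice of the profiles $\phi$ and $\chi$.
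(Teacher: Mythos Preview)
Your overall strategy matches the paper's: build a positive deformation inside the flow box with width at least $\delta$ and with no Reeb orbit of action $\le L$ meeting $\mc{U}$, for every $\delta<A$. Your width estimate is also on the right track and uses essentially the same symplectic change of variables as the paper (your $(u,\rho)=(e^s,re^{s/2})$ is the radial part of the paper's embedding $\psi$).

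The genuine gap is in the no-short-orbit step. You correctly compute $dt(R_\tau)=e^{-\tau f_1}\bigl(1+\tfrac{\tau}{2}\,r\,\partial_r f_1\bigr)$, but you then \emph{allow} this to vanish and try to control the resulting closed orbits and ``turn-around'' arcs. That analysis is not completed, and it appears to fail in general: at a critical point $t_m$ of $\phi$ and at any radius $r_*$ where $1+\tfrac{\tau}{2}r_*\phi(t_m)\chi'(r_*)=0$ for some $\tau\in(0,1]$, there is a circle orbit of action
\[
\pi r_*^2\,e^{\tau\phi(t_m)\chi(r_*)}=\pi r_*^2\,\exp\!\left(\frac{2\chi(r_*)}{r_*\,|\chi'(r_*)|}\right).
\]
For $r_*$ near the outer radius $r_0$ (where $\chi\to 0$) the exponent tends to $0$ and the action tends to $\pi r_0^2=A$, which is typically $\ll L$. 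Since you take $\phi$ large to get the width, such $\tau$ always exist, so a short closed orbit appears for some $\tau$ and the argument breaks. (Also, your aside about trajectories ``entering and leaving through $\{0\}\times D$'' cannot occur: as you yourself observe, $R_\tau$ points strictly inward there.)

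The paper's key simplification is to \emph{impose} the positivity $1+\tfrac{1}{2}\,r\,\partial_r f_1>0$ from the outset (in the paper's variables, $1+xg'(x)>0$ with $x=r^2$). Then $dt(R_\tau)>0$ everywhere, so there are no closed orbits in the box and no turn-arounds; every trajectory crosses from $\{0\}\times D$ to $\{L\}\times D$ in time $\ge L$, and the no-short-orbit property is immediate. The point you are missing is that this positivity constraint is \emph{compatible} with the width estimate whenever $\delta<A$: requiring $|\chi'(r)|<2/(r\,\phi_{\max})$ forces $\phi_{\max}<2\ln(r_0/r_*)$, while your width argument only needs $e^{\phi_{\max}}>\delta/(\pi r_*^2)$; combining these gives $(r_0/r_*)^2>\delta/(\pi r_*^2)$, i.e.\ $A>\delta$, which is exactly your hypothesis. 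So the fix is not to analyze the complicated dynamics you anticipate in your last paragraph, but to choose the profile so that those dynamics never arise.
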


Note that if $\op{Close}^L(Y,\lambda)=0$, then by Remark~\ref{rem:closezeroinfinity}, there does not exist any embedded Reeb trajectory $[0,L]\to Y$, so by definition $\op{Box}^L(Y,\lambda)=-\infty$.

\subsection{Alternative spectral invariants}
\label{sec:ase}

In \cite{altech}, we defined a sequence of symplectic capacities for four-dimensional symplectic manifolds, which are an ``elementary'' alternative to ECH capacities, and which have similar properties and applications to four-dimensional symplectic embedding questions. We denote these alternative capacities here by $c_k^{\op{Alt}}$. They are indexed by a nonnegative integer $k$ and take values in $[0,\infty]$.

In the present paper we use related ideas to define an elementary alternative to the ECH spectrum of a contact three-manifold. For a closed contact three-manifold $(Y,\lambda)$ and a nonnegative integer $k$, we will define a number
\[
c_k(Y,\lambda)\in[0,\infty].
\]
These invariants have the basic following properties, most of which are analogous to properties of $c_k^{\op{Alt}}$ described in \cite[Thm.\ 6]{altech}. To state the properties, define an {\em orbit set\/} in $(Y,\lambda)$ to be a finite set of pairs $\alpha=\{(\alpha_i,m_i)\}$ where the $\alpha_i$ are distinct simple Reeb orbits, and the $m_i$ are positive integers. Define the symplectic action of the orbit set $\alpha$ by
\[
\mc{A}(\alpha) = \sum_im_i\mc{A}(\alpha_i).
\]

\begin{theorem}[proved in \S\ref{sec:pop}]
\label{thm:basicproperties}
The invariants $c_k$ have the following properties:
\begin{description}
\item{(Conformality)}
If $r>0$ then
\[
c_k(Y,r\lambda) = r c_k(Y,\lambda).
\]
\item{(Increasing)}
\[
0 = c_0(Y,\lambda) < c_1(Y,\lambda) \le c_2(Y,\lambda) \le \cdots \le +\infty.
\]
\item{(Disjoint Union)} If $(Y_i,\lambda_i)$ is a closed contact three-manifold for $i=1,\ldots,m$, then
\[
c_k\left(\coprod_{i=1}^m(Y_i,\lambda_i)\right) = \max_{k_1+\cdots+k_m=k} \sum_{i=1}^m c_{k_i}(Y_i,\lambda_i).
\]
\item{(Sublinearity)}
\[
c_{k+l}(Y,\lambda) \le c_k(Y,\lambda) + c_l(Y,\lambda).
\]
\item{(Monotonicity)}
If $f:Y\to\R^{\ge 0}$ then
\[
c_k(Y,\lambda) \le c_k(Y,e^f\lambda).
\]
\item{($C^0$-Continuity)}
For fixed $(Y,\lambda)$ and fixed $k$, the map $C^\infty(Y;\R)\to \R$ sending $f\mapsto c_k\left(Y,e^f\lambda\right)$ is $C^0$-continuous.
\item{(Spectrality)}
For given $(Y,\lambda)$ and $k$, if $c_k(Y,\lambda)$ is finite, then there exists an orbit set $\alpha$ such that $c_k(Y,\lambda)=\mc{A}(\alpha)$.
\item{(Liouville Domains)}
If $(Y,\lambda)$ is the boundary of a Liouville domain $(X,\omega$), see Definition~\ref{def:exact}, then
\[
c_k(Y,\lambda) \le c_k^{\op{Alt}}(X,\omega).
\] 
\item{(Sphere)}
\begin{equation}
\label{eqn:ckball}
c_k\left(\partial B^4(a)\right) = da,
\end{equation}
where $d$ is the unique nonnegative integer such that
\begin{equation}
\label{eqn:d}
d^2+d \le 2k \le d^2+3d.
\end{equation}
\item{(Asymptotic Lower Bound)}
\begin{equation}
\label{eqn:alb}
\liminf_{k\to\infty} \frac{c_k(Y,\lambda)^2}{k} \ge 2\op{vol}(Y,\lambda).
\end{equation}
\item{(Spectral Gap Closing Bound)}
If $k>0$ and $c_k(Y,\lambda) \le L < \infty$, then
\[
\op{Close}^L(Y,\lambda) \le c_k(Y,\lambda) - c_{k-1}(Y,\lambda).
\]
\end{description}
\end{theorem}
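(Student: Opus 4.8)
The plan is to argue by contradiction, following the strategy of Irie's closing lemma as refined in the previous joint work with Edtmair, with the ``Spectrality'' and ``$C^0$‑Continuity'' properties of Theorem~\ref{thm:basicproperties} playing the role that Seiberg--Witten or embedded contact homology cobordism maps play in the ECH version. \emph{Reduction:} since $\op{Close}^L(Y,\lambda)$ is nonincreasing in $L$ and the action spectrum $\op{Spec}(Y,\lambda)$ (the set of actions of orbit sets of $(Y,\lambda)$) is closed and of measure zero, it suffices to treat the case $c_k(Y,\lambda)<L$ — the case $c_k(Y,\lambda)=L$ being recovered afterwards by a limiting argument — to fix $L''\in(c_k(Y,\lambda),L)\setminus\op{Spec}(Y,\lambda)$, and to prove $\op{Close}^{L''}(Y,\lambda)\le c_k(Y,\lambda)-c_{k-1}(Y,\lambda)$. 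By definition of $\op{Close}^{L''}$ as an infimum this reduces to showing: for every nonempty open $\mc{U}\subset Y$ and every positive deformation $\{\lambda_\tau=e^{f_\tau}\lambda\}$ supported in $\mc{U}$ with $\op{width}(\{\lambda_\tau\})>c_k(Y,\lambda)-c_{k-1}(Y,\lambda)$, some $\lambda_\tau$ has a Reeb orbit meeting $\mc{U}$ of action at most $L''$. Suppose not.

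\emph{Pinning the capacity to the spectrum of $\lambda$.} I would first show $c_k(Y,\lambda_\tau)=c_k(Y,\lambda)$ for all $\tau\in[0,1]$. Whenever $c_k(Y,\lambda_\tau)\le L''$, the Spectrality property gives an orbit set $\alpha$ of $\lambda_\tau$ with $\mc{A}_{\lambda_\tau}(\alpha)=c_k(Y,\lambda_\tau)\le L''$; each Reeb orbit in $\alpha$ has action $\le L''$, so by assumption does not meet $\mc{U}$, hence lies in $Y\setminus\mc{U}$ where $\lambda_\tau$ agrees with $\lambda$; thus $\alpha$ is an orbit set of $\lambda$ of the same action, so $c_k(Y,\lambda_\tau)\in\op{Spec}(Y,\lambda)$, and in particular $c_k(Y,\lambda_\tau)\ne L''$. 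Since $\tau\mapsto c_k(Y,\lambda_\tau)$ is continuous (the $C^0$‑Continuity property), equals $c_k(Y,\lambda)<L''$ at $\tau=0$, and never takes the value $L''$, it stays $<L''$, hence takes values in $\op{Spec}(Y,\lambda)$ for every $\tau$. A continuous function on the connected set $[0,1]$ with values in the totally disconnected set $\op{Spec}(Y,\lambda)$ is constant, which proves the claim.

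\emph{Forcing the capacity up with the embedded ball.} By definition of $\op{width}$ there is a symplectic embedding $B^4(a)\hookrightarrow M_{\lambda_1}$ with $a>c_k(Y,\lambda)-c_{k-1}(Y,\lambda)$. The crux is the inequality
\[
c_k(Y,\lambda_1)\ \ge\ c_{k-1}(Y,\lambda)+a .
\]
Granting it, the previous paragraph yields $c_k(Y,\lambda)=c_k(Y,\lambda_1)\ge c_{k-1}(Y,\lambda)+a>c_k(Y,\lambda)$, a contradiction, and the theorem follows (the edge case $L=c_k(Y,\lambda)$ being handled by applying the above to $L''\downarrow c_k(Y,\lambda)$ and extracting a limiting Reeb orbit via Arzel\`a--Ascoli). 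To prove the displayed inequality I would use the construction of $c_k$ as a max--min of symplectic areas of holomorphic curves subject to $k$ point constraints. The region $M_{\lambda_1}$ sits canonically inside the completed symplectization in which $c_k(Y,\lambda_1)$ is computed: in coordinates adapted to $\lambda_1$ it is the collar lying just below the zero slice, obtained by regluing the concave end over $(Y,\lambda)$. One may therefore require one of the $k$ point constraints to be a generic point near the center of the embedded ball $B^4(a)\subset M_{\lambda_1}$. Any holomorphic curve meeting all $k$ constraints then meets this ball, so by Gromov's monotonicity inequality the part of the curve inside $B^4(a)$ has symplectic area at least $a$ (up to a loss tending to $0$ as the marked point approaches the center); this area is localized in $M_{\lambda_1}$, hence is over and above the area at least $c_{k-1}(Y,\lambda)$ that the curve must already spend to meet the remaining $k-1$ constraints and run out to the positive end over $(Y,\lambda_1)$ — here one compares $\lambda$ and $\lambda_1$ using the Monotonicity property. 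Passing to the supremum over constraint configurations and letting $a\uparrow\op{width}(\{\lambda_\tau\})$ gives the inequality.

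\emph{Main obstacle.} The hard step is the last one: rigorously separating the symplectic area into the ``ball'' contribution and the ``remaining $k-1$ constraints'' contribution and showing that they add. I expect this to require a neck‑stretching / SFT‑compactness argument along a contact hypersurface separating $B^4(a)$ (or all of $M_{\lambda_1}$) from the positive end, degenerating a curve meeting all $k$ constraints into a holomorphic building whose bottom level lies in the completion of $B^4(a)$ and contains the distinguished marked point — contributing area at least $a$ — and whose remaining levels assemble into a configuration meeting the other $k-1$ constraints and the positive end over $(Y,\lambda_1)$ — contributing area at least $c_{k-1}(Y,\lambda)$ — together with the facts that the area of a building is the sum of the areas of its levels and that the transversality and index conditions underpinning the definition of $c_k$ survive the extra point constraint for generic data. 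The remaining ingredients are routine given Theorem~\ref{thm:basicproperties}: the Spectrality, $C^0$‑Continuity and Increasing properties power the pinning step, and the fact that the contact action spectrum is closed and of measure zero is standard.
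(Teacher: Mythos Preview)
Your overall architecture matches the paper's proof exactly: assume a positive deformation of large width produces no short orbit through $\mc{U}$, use Spectrality and $C^0$-Continuity to pin $c_k(Y,\lambda_\tau)$ to the fixed measure-zero set $\op{Spec}(Y,\lambda)\cap[0,L]$ (hence constant), and then contradict this via the key inequality
\[
c_k(Y,\lambda_1)\ \ge\ c_{k-1}(Y,\lambda) + c_1^{\op{Alt}}(B^4(a)) = c_{k-1}(Y,\lambda)+a.
\]
This inequality is precisely Lemma~\ref{lem:gaplb} of the paper (with $l=1$, $X=B^4(a)$), and the paper proves it not by an ad hoc neck-stretch but by first establishing a general \emph{Stripping} property for the cobordism invariants $b_k$ (Lemma~\ref{lem:bkproperties}), from which Lemma~\ref{lem:gaplb} follows formally via Disjoint Union and the identification $b_l=c_l^{\op{Alt}}$ on Liouville domains. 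Your ``main obstacle'' paragraph is exactly this Stripping argument, just not packaged modularly.

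Two points where your write-up drifts. First, the appeal to ``Gromov's monotonicity inequality'' to get area $\ge a$ inside the embedded ball is not correct: monotonicity gives a lower bound $c(J)\,r^2$ with a constant depending on the (non-standard) almost complex structure pulled back to $B^4(a)$, not the sharp value $a=\pi r^2$. The sharp bound really does require the neck-stretching you describe afterwards, which produces a curve in the completion of $B^4(a)$ with a positive end on $\partial B^4(a)$, hence energy at least the minimal Reeb action $a$; this is what $c_1^{\op{Alt}}(B^4(a))=a$ encodes. Second, your heuristic that the ball contribution is ``over and above'' the rest because it is ``localized in $M_{\lambda_1}$'' does not work at the level of $\mc{E}_+$, which is the action of the positive asymptotics rather than an integrated area; the separation genuinely needs the SFT degeneration (equivalently, the paper's Stripping property), and the Disjoint Union step is what makes the two contributions add.

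Finally, your reduction to $L''\in(c_k,L)\setminus\op{Spec}$ is more careful than the paper, which asserts constancy of $\tau\mapsto c_k(Y,\lambda_\tau)$ directly. The paper's one-line argument is fine when $c_k(Y,\lambda)<L$; in the borderline case $c_k(Y,\lambda)=L$ your instinct to argue by approximation is reasonable, though the Arzel\`a--Ascoli extraction you sketch would need the $\tau$'s to be controlled as $L''\searrow L$. This is a minor technicality either way.
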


\begin{remark}
It follows from Theorem*~\ref{theoremstar:asymptotics} below that in fact, $c_k(Y,\lambda)$ is always finite. By the Spectrality property, an independent proof that $c_1(Y,\lambda)$ is always finite without using Seiberg-Witten theory would constitute a new proof of the Weinstein conjecture in three dimensions\footnote{The Weinstein conjecture in three dimensions asserts that every contact form on a closed three-manifold has a Reeb orbit. This was proved by Taubes \cite{taubesweinstein} using Seiberg-Witten theory. Proofs without using Seiberg-Witten theory are known in some cases; see e.g.\ the survey \cite{tw}.}.
\end{remark}

\begin{remark}
\label{rem:toric}
One can use the properties in Theorem~\ref{thm:basicproperties} to compute $c_k$ for many more examples than just $\partial B^4(a)$. For example, if $Y$ is the boundary of a ``convex toric domain'' or ``concave toric domain'' $X\subset \R^4$, then combinatorial formulas for $c_k^{\op{Alt}}(X)$ are given in  \cite[Thms.\ 9, 15]{altech}; and one can modify the proofs of these theorems to show that $c_k(Y)$ agrees with $c_k^{\op{Alt}}(X)$ in these cases.
\end{remark}

To work with the Spectral Gap Closing Bound, the following definition is useful:

\begin{definition}
\label{def:gap}
If $L>0$, define the {\em spectral gap\/} by
\[
\op{Gap}^L(Y,\lambda) = \inf_{k>0, \; c_k(Y,\lambda)\le L}\left(c_k(Y,\lambda) - c_{k-1}(Y,\lambda)\right).
\]
\end{definition}

The Spectral Gap Closing Property can then be restated as
\begin{equation}
\label{eqn:CloseGap}
\boxed{
\op{Close}^L(Y,\lambda) \le \op{Gap}^L(Y,\lambda).
}
\end{equation}

\begin{example}
If $Y=\partial B^4(a)$, then it follows from \eqref{eqn:ckball} that $c_1(Y)=c_2(Y)=a$. Thus
\[
\op{Gap}^L(Y) = \left\{\begin{array}{cl} +\infty, & L<a,\\ 0, & L\ge a. \end{array}\right.
\]
In this case equality holds in \eqref{eqn:CloseGap}, by Remark~\ref{rem:closezeroinfinity}, since every point in $\partial B^4(a)$ is on a simple Reeb orbit of action $a$.
\end{example}

See \S\ref{sec:starshaped} for a simple application of the bound \eqref{eqn:CloseGap} to prove Theorem~\ref{thm:ballgap} regarding star-shaped hypersurfaces.

With the help of Seiberg-Witten theory, we will prove in \S\ref{sec:asymptotics} that the alternative spectrum satisfies the following additional property:

\begin{theoremstar}[Asymptotics]
\label{theoremstar:asymptotics}
Let $(Y,\lambda)$ be any closed contact three-manifold. Then
\[
\lim_{k\to\infty} \frac{c_k(Y,\lambda)^2}{k} = 2\op{vol}(Y,\lambda).
\]
\end{theoremstar}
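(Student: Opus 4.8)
The plan is to deduce Theorem*~\ref{theoremstar:asymptotics} by matching the elementary spectrum $c_k(Y,\lambda)$ against the ECH spectrum $c_k^{\op{ECH}}(Y,\lambda)$, for which the asymptotic law
\[
\lim_{k\to\infty}\frac{c_k^{\op{ECH}}(Y,\lambda)^2}{k} = 2\op{vol}(Y,\lambda)
\]
is already known by the ``Weyl law'' of Cristofaro-Gardiner--Hutchings--Ramos. The Asymptotic Lower Bound \eqref{eqn:alb} in Theorem~\ref{thm:basicproperties} already gives one half, namely $\liminf_{k\to\infty} c_k(Y,\lambda)^2/k \ge 2\op{vol}(Y,\lambda)$, and this half does not use Seiberg--Witten theory. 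So the real content here is the matching upper bound
\[
\limsup_{k\to\infty}\frac{c_k(Y,\lambda)^2}{k} \le 2\op{vol}(Y,\lambda),
\]
and the natural route is to show $c_k(Y,\lambda)\le c_k^{\op{ECH}}(Y,\lambda)$ for all $k$ (or at least asymptotically), since then the ECH Weyl law immediately supplies the upper bound and hence the limit.

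First I would recall the definition of $c_k(Y,\lambda)$ from \S\ref{sec:ase} (the body of the paper, which the excerpt does not yet display): like $c_k^{\op{Alt}}$ in \cite{altech}, it is a max--min over holomorphic curves in the symplectization (or a symplectic completion/cobordism) passing through a generic configuration of $k$ points, taking the infimal energy over such curves and then the supremum over point configurations. The key structural fact, in analogy with \cite[Thm.\ 6]{altech} and its proof, is that whenever there is an ECH class in the appropriate grading with ECH index $2k$ and nonzero $U$-map behavior, the corresponding Seiberg--Witten / ECH moduli space is nonempty, and by Taubes's Gromov-to-ECH correspondence (or directly by the properties of ECH cobordism maps) it contains a broken or honest holomorphic current through any $k$ points whose total action is at most $c_k^{\op{ECH}}(Y,\lambda)$. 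Feeding this current into the max--min definition of $c_k$ bounds $c_k(Y,\lambda)$ above by that action; taking the infimum over admissible ECH classes gives $c_k(Y,\lambda)\le c_k^{\op{ECH}}(Y,\lambda)$. This is exactly the kind of argument that forces the ``two asterisks'': it is Seiberg--Witten theory, through ECH, that guarantees the relevant moduli spaces are nonempty in the first place.

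The steps, in order, are: (1) set up the generic $k$-point constraint and the associated max--min quantity, recalling the precise definition of $c_k$; (2) invoke the nontriviality of the ECH $U$-map iterated $k$ times on $ECH_*(Y,\lambda)$ and choose an ECH class realizing $c_k^{\op{ECH}}(Y,\lambda)$ (using that ECH is finitely generated in each grading and the $U$-map eventually acts nontrivially, which is where Seiberg--Witten enters); (3) apply Taubes's correspondence to produce, for a generic almost complex structure $J$ and a generic $k$-point configuration, a (possibly broken, possibly multiply covered in trivial-cylinder components) $J$-holomorphic current in the symplectization passing through all $k$ points with total action $\le c_k^{\op{ECH}}(Y,\lambda)+\eps$; (4) check that this current is admissible for the max--min defining $c_k$ and extract $c_k(Y,\lambda)\le c_k^{\op{ECH}}(Y,\lambda)$; (5) combine with \eqref{eqn:alb} and the ECH Weyl law to conclude. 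I expect step (3) to be the main obstacle: one must be careful that the holomorphic current produced by Taubes's mechanism genuinely passes through the prescribed points (this is a point-constraint / compactness argument already handled in \cite{altech} for the closed symplectic case, but the cylindrical-end setting requires a matching SFT-type compactness and gluing discussion), and that its total action is controlled by the ECH symplectic action rather than being inflated by extra trivial cylinders or by energy escaping to the ends. Once that bookkeeping is in place — essentially transplanting the argument of \cite{altech} from the ball/closed setting to the symplectization — the rest is formal.
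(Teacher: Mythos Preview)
Your proposal is correct and follows essentially the same route as the paper: combine the Asymptotic Lower Bound with an upper bound obtained by comparing $c_k$ to ECH spectral invariants via holomorphic curves through $k$ points, then invoke the ECH Weyl law from \cite{vc}. The paper packages your steps (2)--(4) as Theorem**~\ref{theoremstarstar:comparison}, proving $c_k(Y,\lambda)\le c^{\op{ECH}}_{\sigma_k}(Y,\lambda)$ for any $U$-sequence $\{\sigma_k\}$ by using the filtered ECH cobordism maps of \cite{cc2} on $[-R,0]\times Y$ (rather than working directly in the symplectization), which supplies exactly the chain-level curve-through-$k$-points statement you anticipate in step (3).
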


From this and \eqref{eqn:CloseGap} one can deduce the general quantitative closing lemma, Theorem*~\ref{theoremstar:qcl}, by a bit of calculus; see \S\ref{sec:qcl}.

\subsection{Relation with other work}

{\bf Alternative spectral invariants.\/}
As noted in \S\ref{sec:ase}, the definition of the alternate ECH spectrum given here is a variant of the definition of alternative ECH capacities of four-dimensional symplectic manifolds in \cite{altech}. The latter was inspired by a max-min invariant defined by McDuff-Siegel \cite{ms} for symplectic manifolds of any dimension as an alternative to Siegel's rational SFT capacities \cite{siegel}. Some similar elementary spectral invariants are defined by Edtmair \cite{oliver} in the context of area-preserving diffeomorphisms of closed surfaces, with the slight difference that these are ``relative'' invariants of a Hamiltonian isotopy, by contrast with our spectral invariants which are absolute invariants of a single contact form. After the first version of this paper appeared, a more general theory of elementary spectral gaps was developed in \cite{ct}.

{\bf Closing lemmas.\/} As noted in \S\ref{sec:qclintro}, the quantitative closing lemma in Theorem*~\ref{theoremstar:qcl} is a refinement of the ``strong closing property'' proved by Irie \cite{irie}. The basic mechanism used here for obtaining quantitative closing lemmas from spectral gaps is similar to the method used in \cite{pfh4} to obtain quantitative closing lemmas for area-preserving diffeomorphisms of closed surfaces in ``rational'' Hamiltonian isotopy classes. An statement similar to the strong closing property in this case was proved independently in \cite{cpz}.

Irie asked in \cite{iriestrong} whether the strong closing property holds for at least one higher dimensional contact manifold where the Reeb orbits are not already dense. This was proved for boundaries of ellipsoids in \cite{cdpt} using spectral gaps in contact homology. A different proof for this example was given using KAM theory by Xue \cite{xue}.

{\bf ECH spectrum.\/} Theorem**~\ref{theoremstarstar:comparison} below gives a relation between the invariants $c_k$ introduced here and the ECH spectrum originally introduced in \cite{qech}. The specific applications in this paper, namely Theorem*~\ref{theoremstar:qcl}, Theorem~\ref{thm:ballgap}, and Theorem~\ref{thm:ellipsoid}, can also be proved directly from the ECH spectrum, although for the latter two results this would require using Seiberg-Witten theory which we do not need here. See \S\ref{sec:othergaps} for a comparison of approaches.

\subsection{Outline of the rest of the paper}

In \S\ref{sec:preliminaries} we introduce some basic definitions that we need. In \S\ref{sec:maxmin} we define a family of ``spectral invariants'' $b_k$ of four-dimensional strong symplectic cobordisms, which is a precursor to the definition of the spectral invariants $c_k$ of contact three-manifolds. In \S\ref{sec:main} we define the invariants $c_k$ and prove Theorem~\ref{thm:basicproperties} describing their basic properties. In \S\ref{sec:calculus}, we use various calculations to prove Theorem~\ref{thm:ballgap}, Theorem~\ref{thm:ellipsoid}, and Proposition~\ref{prop:BoxClose}. Finally, in \S\ref{sec:comparison} we establish a relation between $c_k$ and the ECH spectrum; we use this to prove Theorem*~\ref{theoremstar:asymptotics} on the asymptotics of $c_k$ and then deduce the general quantitative closing lemma, Theorem*~\ref{theoremstar:qcl}.

\paragraph{Acknowledgments.} Thanks to Oliver Edtmair for helpful discussions.


\section{Preliminaries}
\label{sec:preliminaries}

To prepare for the definition of alternative spectral invariants, we need the following mostly standard preliminaries.

\subsection{Symplectic cobordisms}

Let $(Y_+,\lambda_+)$ and $(Y_-,\lambda_-)$ be closed contact three-manifolds.

\begin{definition}
A {\em strong symplectic cobordism\/} from $(Y_+,\lambda_+)$ to $(Y_-,\lambda_-)$ is a compact symplectic four-manifold $(X,\omega)$ with boundary $\partial X = Y_+ - Y_-$, such that $\omega|_{Y_\pm}=d\lambda_\pm$.
\end{definition}

\begin{definition}
\label{def:exact}
An {\em exact symplectic cobordism\/} is a strong symplectic cobordism $(X,\omega)$ as above, for which there exists a $1$-form $\lambda$ on $X$ such that $d\lambda = \omega$ and $\lambda|_{Y_\pm} = \lambda_\pm$. A {\em Liouville domain\/} is an exact symplectic cobordism for which $Y_-=\emptyset$.
\end{definition}

\begin{example}
If $(Y,\lambda)$ is a closed contact three-manifold and $a<b$, then the {\em trivial cobordism\/}
\[
(X,\omega) = \left([a,b]\times Y,d\left(e^s\lambda\right)\right) 
\]
is an exact symplectic cobordism from $(Y,e^{b}\lambda)$ to $(Y,e^{a}\lambda)$. Here $s$ denotes the $[a,b]$ coordinate. When there is no chance for confusion, we will drop the symplectic form $d(e^s\lambda)$ from the notation.
\end{example}

Given an exact symplectic cobordism and a choice of $\lambda$ as in Definition~\ref{def:exact}, let $V$ denote the vector field on $X$ characterized by $\imath_V\omega = \lambda$. This is a Liouville vector field, i.e.\ the Lie derivative $\mc{L}_V\omega = \omega$, and it is transverse to the boundary of $X$. It follows that the flow of $V$ defines an identification
\begin{equation}
\label{eqn:N+}
N_+ \simeq (-\epsilon,0]\times Y_+
\end{equation}
where $N_+$ is a neighborhood of $Y_+$ in $X$, and $\lambda$ is identified with $e^s\lambda_+$, where $s$ denotes the $(-\epsilon,0]$ coordinate. Likewise we have an identification
\begin{equation}
\label{eqn:N-}
N_- \simeq [0,\epsilon)\times Y_-
\end{equation}
where $N_-$ is a neighborhood of $Y_-$ in $X$, and $\lambda$ is identified with $e^s\lambda_-$.

If $(X,\omega)$ is a strong symplectic cobordism which is not necessarily exact, we can still find a (noncanonical) Liouville vector field in a neighborhood of the boundary which is transverse to the boundary. This gives neighborhood identifications \eqref{eqn:N+} and \eqref{eqn:N-} identifying $\omega$ with $d(e^s\lambda_\pm)$.

In various constructions below, we implicitly assume that neighborhood identifications \eqref{eqn:N+} and \eqref{eqn:N-} as above have been chosen, and in the exact case that moreover $\lambda$ has been chosen. The spectral invariants that we define will ultimately be independent of these choices.

\begin{definition}
If $(X,\omega)$ is a strong symplectic cobordism from $(Y_+,\lambda_+)$ to $(Y_-,\lambda_-)$, define the {\em symplectization completion\/}
\[
\overline{X} = \left((-\infty,0]\times Y_-\right) \cup_{Y_-} X \cup_{Y_+} \left([0,\infty)\times Y_+\right).
\]
This is glued to a smooth manifold using the identifications \eqref{eqn:N+} and \eqref{eqn:N-}. There is a natural symplectic form $\overline{\omega}$ on $\overline{X}$, which agrees with $\omega$ on $X$ and with $d(e^s\lambda_{\pm})$ on the other two pieces.
\end{definition}

\begin{definition}
\label{def:composition}
If $(X_+,\omega)$ is a strong symplectic cobordism from $(Y_+,\lambda_+)$ to $(Y_0,\lambda_0)$, and if $(X_-,\omega_-)$ is a strong symplectic cobordism from $(Y_0,\lambda_0)$ to $(Y_-,\lambda_-)$, define the {\em composition\/}
\[
(X_-,\omega_-) \circ (X_+,\omega_+) = X_- \cup_{Y_0} X_+
\]
with the symplectic form that restricts to $\omega_\pm$ on $X_\pm$.
\end{definition}

Next let $(X,\omega)$ be a strong symplectic cobordism from $(Y_+,\lambda_+)$ to $(Y_-,\lambda_-)$, let $\Sigma$ be a compact oriented smooth surface with boundary, and let $f:\Sigma\to X$ be a smooth map such that $f(\partial\Sigma)\subset\partial X$. Write $\partial\Sigma = \partial_+\Sigma - \partial_-\Sigma$ where $\partial_\pm\Sigma$ denotes the portion of $\partial\Sigma$ that maps to $Y_\pm$, oriented with the $\pm$ boundary orientation of $\Sigma$. Let $A\in H_2(X,\partial X)$ denote the relative homology class represented by $f$.

\begin{lemma}
There is a homomorphism $\rho: H_2(X,\partial X)\to \R$ characterized by the property that if $f:\Sigma\to X$ represents $A\in H_2(X,\partial X)$ as above, then
\begin{equation}
\label{eqn:rho}
\int_\Sigma\omega - \int_{\partial_+\Sigma}\lambda_+ + \int_{\partial_-\Sigma}\lambda_- = \rho(A).
\end{equation}
\end{lemma}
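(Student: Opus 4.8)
The plan is to define $\rho$ on a cycle level and then check that it is well-defined on homology and linear. Given a class $A\in H_2(X,\partial X)$, choose a map $f:\Sigma\to X$ with $f(\partial\Sigma)\subset\partial X$ representing $A$ — such a representative exists because every relative homology class in a manifold with boundary is represented by a map of a compact oriented surface with boundary (after taking connected sums of the components of an embedded surface representative, or simply working with singular cycles and using that $H_2(X,\partial X)$ is generated by such classes in dimension $2$). Then I would simply \emph{define} $\rho(A)$ by the left-hand side of \eqref{eqn:rho}, namely
\[
\rho(A) = \int_\Sigma f^*\omega - \int_{\partial_+\Sigma}f^*\lambda_+ + \int_{\partial_-\Sigma}f^*\lambda_-.
\]
The main work is to show this does not depend on the choice of $f$.

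For well-definedness, suppose $f_0:\Sigma_0\to X$ and $f_1:\Sigma_1\to X$ both represent $A$. Then the difference cycle $f_0 - f_1$ is a relative boundary, so there is a compact oriented $3$-manifold $W$ with a map $g:W\to X$ such that $g(\partial W)\subset \partial X\cup (\text{image of }f_0, f_1)$; more precisely $\partial W$ decomposes into $\Sigma_0$, $-\Sigma_1$, and a piece $Z$ mapping into $\partial X = Y_+ - Y_-$. I would then apply Stokes' theorem to $g^*\omega$ over $W$. Since $d\omega = 0$, this gives $\int_{\Sigma_0}f_0^*\omega - \int_{\Sigma_1}f_1^*\omega = -\int_Z g^*\omega$. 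On the $Y_\pm$ parts of $Z$ we have $\omega|_{Y_\pm} = d\lambda_\pm$, so applying Stokes again on each component of $Z$ converts $\int_Z g^*\omega$ into boundary integrals of $\lambda_\pm$ along $\partial Z$, which consists precisely of $\partial_\pm\Sigma_0$ and $\partial_\pm\Sigma_1$ with appropriate signs. Collecting terms shows the two expressions for $\rho$ agree. Linearity $\rho(A+B) = \rho(A)+\rho(B)$ is then immediate: represent $A+B$ by the disjoint union (or boundary connect sum) of representatives of $A$ and $B$, and note all three integrals are additive over disjoint unions; similarly $\rho(-A)=-\rho(A)$ by reversing orientation.

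The step I expect to be the main obstacle is the bookkeeping of orientations and boundary decompositions in the Stokes' theorem argument — in particular, correctly matching the induced boundary orientation on $Z\cap Y_\pm$ with the $\pm$ boundary orientation convention on $\partial_\pm\Sigma$ used in \eqref{eqn:rho}, and handling the corners where $Z$ meets $\Sigma_0$ and $\Sigma_1$. This is routine but error-prone; one clean way to sidestep the corner issue is to first homotope $f_0$ and $f_1$ so that near $\partial X$ they are ``cylindrical'', i.e. of the form $(\text{constant in }s)$ under the identifications \eqref{eqn:N+}, \eqref{eqn:N-}, which is possible since the Liouville flow retracts a neighborhood of $\partial X$ onto $\partial X$ and $\lambda$ pulls back compatibly; then $W$ can be chosen cylindrical near $\partial X$ as well and the corners become genuine product corners with unambiguous orientations. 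Alternatively, one can phrase the entire argument in terms of the relative de Rham class: the form $\omega$ together with the primitives $\lambda_\pm$ on $\partial X$ defines a class in $H^2_{dR}(X,\partial X)$, and $\rho$ is just the pairing of this relative class with $A\in H_2(X,\partial X)$, which is manifestly well-defined and linear; I would likely present this viewpoint as the conceptual explanation and use the Stokes computation only to verify formula \eqref{eqn:rho}.
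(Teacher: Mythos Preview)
Your proposal is correct and follows exactly the approach the paper takes: the paper's entire proof is the single sentence ``We need to check that the left hand side of \eqref{eqn:rho} depends only on $A$. This is an exercise using Stokes's theorem.'' You have simply written out that exercise in detail (and your alternative phrasing via the relative de Rham class in $H^2_{dR}(X,\partial X)$ is a clean way to package the same computation).
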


\begin{proof}
We need to check that the left hand side of \eqref{eqn:rho} depends only on $A$. This is an exercise using Stokes's theorem.
\end{proof}

\begin{examples}
If $(X,\omega)$ is exact, then $\rho\equiv 0$, by Stokes's theorem.

If $Y_+=Y_-=\emptyset$, so that $(X,\omega)$ is a closed symplectic four-manifold, then $\rho$ is just the map $\langle[\omega],\cdot\rangle : H_2(X)\to\R$.

The map $\rho$ for a composition as in Definition~\ref{def:composition} is given by
\begin{equation}
\label{eqn:rhocomposition}
\rho(A) = \rho_+(A_-) + \rho_-(A_+)
\end{equation}
for $A\in H_2(X_-\circ X_+,\partial(X_-\circ X_+))$, where $A_\pm$ denotes the restriction of $A$ to $H_2(X_\pm,\partial X_\pm)$.
\end{examples}

\subsection{Almost complex structures}

\begin{definition}
Let $(Y,\lambda)$ be a closed contact three-manifold. An almost complex structure $J$ on $\R\times Y$ is {\em $\lambda$-compatible\/} if:
\begin{itemize}
\item $J(\partial_s)=R$, where $s$ denotes the $\R$ coordinate and $R$ denotes the Reeb vector field.
\item $J(\xi)=\xi$, rotating positively with respect to $d\lambda$, where $\xi=\Ker(\lambda)$.
\item $J$ is invariant under translation of the $\R$ factor on $\R\times Y$.
\end{itemize}
Let $\mc{J}(Y,\lambda)$ denote the space of $\lambda$-compatible almost complex structures on $\R\times Y$.
\end{definition}

\begin{definition}
Let $(X,\omega)$ be a strong symplectic cobordism from $(Y_+,\lambda_+)$ to $(Y_-,\lambda_-)$. An almost complex structure $J$ on the symplectization completion $\overline{X}$ is {\em cobordism-compatible\/} if:
\begin{itemize}
\item $J$ is $\omega$-compatible on $X$.
\item There are almost complex structures $J_\pm\in\mc{J}(Y_\pm,\lambda_\pm)$ such that $J$ agrees with the restriction of $J_-$ on $(-\infty,0]\times Y_-$ and with the restriction of $J_+$ on $[0,\infty)\times Y_+$.
\end{itemize}
Let $\mc{J}(\overline{X})$ denote the set of cobordism-compatible almost complex structures on $\overline{X}$.
\end{definition}

\subsection{Holomorphic curves}

Let $(X,\omega)$ be a strong symplectic cobordism from $(Y_+,\lambda_+)$ to $(Y_-,\lambda_-)$. Assume that the contact forms $\lambda_\pm$ are nondegenerate. Let $J\in\mc{J}(\overline{X})$.

\begin{definition}
Let $\mc{M}^J(\overline{X})$ denote the set of holomorphic maps
\[
u:(\Sigma,j) \longrightarrow (\overline{X},J),
\]
modulo reparametrization by biholomorphic maps $(\Sigma',j')\stackrel{\simeq}{\to}(\Sigma,j)$, such that:
\begin{itemize}
\item
The domain $(\Sigma,j)$ is a punctured compact Riemann surface, possibly disconnected.
\item
The map $u$ is nonconstant on each component of the domain $\Sigma$.
\item
For each puncture in $\Sigma$, either there is a Reeb orbit $\gamma$ in $Y_+$ and a neighborhood of the puncture mapping asymptotically to $[0,\infty)\times\gamma$ as $s\to\infty$ (we call this a {\em positive puncture\/}), or there is a Reeb orbit $\gamma$ in $Y_-$ and a neighborhood of the puncture mapping asymptotically to $(-\infty,0]\times\gamma$ as $s\to -\infty$ (we call this a {\em negative puncture\/}).
\end{itemize}
\end{definition}

Given $u\in\mc{M}^J(\overline{X})$, the positive punctures determine an orbit set $\alpha_+$ in $Y_+$ as follows: A pair $(\gamma,m)$ is an element of $\alpha_+$ when $u$ has at least one positive puncture asymptotic to a cover of $\gamma$, and $m$ is the sum over such punctures of the covering multiplicity of $\gamma$. Likewise, the negative punctures determine an orbit set $\alpha_-$ in $Y_-$. We denote by $\mc{M}^J(\overline{X},\alpha_+,\alpha_-)$ the set of $u\in\mc{M}^J(\overline{X})$ with corresponding orbit sets $\alpha_+$ and $\alpha_-$.

Note also that each $u\in\mc{M}^J(\overline{X})$ determines a well-defined relative homology class
\[
[u]\in H_2(X,\partial X).
\]

\begin{definition}
If $u\in\mc{M}^J(\overline{X},\alpha_+,\alpha_-)$, define the {\em energy\/}
\[
\mc{E}(u) = \mc{A}(\alpha_+) - \mc{A}(\alpha_-) + \rho([u])\in\R.
\]
Also define the {\em upper energy\/}
\[
\mc{E}_+(u) = \mc{E}(u) + \mc{A}(\alpha_-) = \mc{A}(\alpha_+) + \rho([u]). 
\]
\end{definition}

\begin{lemma}
\label{lem:energy}
If $u\in\mc{M}^J(\overline{X},\alpha_+,\alpha_-)$, then $\mc{E}(u) \ge 0$, with equality if and only if $u$ is the empty holomorphic curve (for which the domain is the empty set).
\end{lemma}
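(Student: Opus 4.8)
The plan is to rewrite $\mc{E}(u)$ as a sum of three manifestly nonnegative integrals and then determine when they all vanish. Decompose $\overline{X}$ as the disjoint union $X\sqcup E_+^\circ\sqcup E_-^\circ$, where $E_+^\circ=(0,\infty)\times Y_+$ and $E_-^\circ=(-\infty,0)\times Y_-$ are the open cylindrical ends, and set $\Sigma^X=u^{-1}(X)$ and $\Sigma^\pm=u^{-1}(\overline{E_\pm^\circ})$, so that $\Sigma=\Sigma^X\cup\Sigma^+\cup\Sigma^-$ with pairwise intersections the circles $u^{-1}(\{0\}\times Y_\pm)$ (we may assume $u$ is transverse to $\{0\}\times Y_\pm$ after replacing the gluing level by a generic nearby one). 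Then $u|_{\Sigma^X}\colon\Sigma^X\to X$ is a map from a compact surface with boundary whose relative homology class is $[u]\in H_2(X,\partial X)$, and the key identity is
\[
\mc{E}(u)=\int_{\Sigma^X}u^*\omega+\int_{\Sigma^+}u^*d\lambda_++\int_{\Sigma^-}u^*d\lambda_-.
\]

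To prove this, first evaluate the two end-integrals: since $u^*d\lambda_\pm\ge 0$, each equals the monotone limit as $T\to\infty$ of its restriction to the truncation $\{|s|\le T\}$, and Stokes' theorem rewrites the truncated integral as a boundary term at level $s=\pm T$ — which converges to $\pm\mc{A}(\alpha_\pm)$ by the hypothesis that $u$ is asymptotic to covers of Reeb orbits at its punctures — plus a boundary term along $u^{-1}(\{0\}\times Y_\pm)$. Next, $\int_{\Sigma^X}u^*\omega$ equals $\rho([u])$ plus boundary terms along the same circles, by the defining property \eqref{eqn:rho} of $\rho$ applied to $u|_{\Sigma^X}$. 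The circles $u^{-1}(\{0\}\times Y_\pm)$ occur in $\partial\Sigma^X$ and in $\partial\Sigma^\pm$ with opposite orientations, so the level-$0$ boundary terms cancel when one adds the three integrals, and what remains is exactly $\mc{A}(\alpha_+)-\mc{A}(\alpha_-)+\rho([u])=\mc{E}(u)$. Getting these cancellations right, with matching signs, is essentially the same "exercise using Stokes's theorem" that underlies the well-definedness of $\rho$, and I expect it to be the only genuinely fiddly part of the argument.

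Granting the identity, $\mc{E}(u)\ge 0$ is immediate: $u^*\omega\ge 0$ on $\Sigma^X$ because $J$ is $\omega$-compatible on $X$, and $u^*d\lambda_\pm\ge 0$ on $\Sigma^\pm$ because $J$ restricts there to a $\lambda_\pm$-compatible almost complex structure, so $d\lambda_\pm$ is nonnegative on $J$-complex tangent lines. For equality, suppose $\mc{E}(u)=0$, so all three integrals vanish, and suppose for contradiction that $\Sigma\neq\emptyset$; pick a connected component $\Sigma_i$, on which $u$ is non-constant. If $u(\Sigma_i)$ meets $\op{int}(X)$, then $u^*\omega$ is positive somewhere on $\Sigma_i\cap\Sigma^X$ (otherwise $u$ would be locally, hence by unique continuation globally, constant on $\Sigma_i$), contradicting $\int_{\Sigma^X}u^*\omega=0$. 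Otherwise $u(\Sigma_i)$ lies in one of the ends, say $\overline{E_+^\circ}=[0,\infty)\times Y_+$ (the two ends are disjoint in $\overline{X}$ and $\Sigma_i$ is connected), so $u|_{\Sigma_i}$ is a non-constant holomorphic curve in the symplectization $\R\times Y_+$ all of whose punctures are positive; then $\int_{\Sigma_i}u^*d\lambda_+=0$ would force $u^*d\lambda_+\equiv 0$, hence $u|_{\Sigma_i}$ would be a branched cover of a trivial cylinder $\R\times\gamma$, and such a cover necessarily has a negative puncture — compactify the domain to a closed Riemann surface and note that the resulting branched cover of $S^2=(\R\times\gamma)\cup\{0,\infty\}$ has equal positive degrees over $0$ and $\infty$ — contradicting that $\Sigma_i$ has no negative punctures. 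In all cases $\Sigma=\emptyset$, i.e.\ $u$ is the empty curve; the converse implication is trivial.
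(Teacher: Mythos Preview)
Your proof is correct and follows essentially the same route as the paper: decompose $\overline{X}$ into $X$ and the two cylindrical ends, apply Stokes on each piece (using the defining property \eqref{eqn:rho} of $\rho$ on $X$ and truncation-plus-limit on the ends) to obtain the identity
\[
\mc{E}(u)=\int_{u^{-1}(X)}u^*\omega+\int_{u^{-1}([0,\infty)\times Y_+)}u^*d\lambda_+ + \int_{u^{-1}((-\infty,0]\times Y_-)}u^*d\lambda_-,
\]
and then read off nonnegativity from compatibility of $J$. The only difference is in the equality case: the paper simply asserts that a nonempty curve with $u^{-1}(\op{int} X)=\emptyset$ and $u^*d\lambda_\pm\equiv 0$ on the ends is impossible, whereas you spell this out via the branched-cover-of-$S^2$ argument showing that a component confined to one end would need punctures at both $s\to+\infty$ and $s\to-\infty$. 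That extra paragraph is a welcome clarification rather than a different method.
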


\begin{proof}
By perturbing if necessary, we can assume without loss of generality that $u$ is transverse to $\partial X$. By Stokes's theorem we have
\[
\mc{A}(\alpha_+) - \int_{u^{-1}(\{0\}\times Y_+)}\lambda_+ = \int_{u^{-1}([0,\infty)\times Y_+)}d\lambda_+.
\]
Similarly, we have
\[
\int_{u^{-1}(\{0\}\times Y_-)}\lambda_- - \mc{A}(\alpha_-)  = \int_{u^{-1}((-\infty,0]\times Y_-)}d\lambda_-.
\]
Finally, by \eqref{eqn:rho} we have
\[
\int_{u^{-1}(\{0\}\times Y_+)}\lambda_+ - \int_{u^{-1}(\{0\}\times Y_-)}\lambda_- + \rho([u]) = \int_{u^{-1}(X)}\omega .
\]
Adding the above three equations gives
\[
\mc{E}(u) =  \int_{u^{-1}([0,\infty)\times Y_+)}d\lambda_+ + \int_{u^{-1}(X)}\omega + \int_{u^{-1}((-\infty,0]\times Y_-)}d\lambda_-
\]
(which might be a more standard way to define ``energy''). Each of the integrands on the right hand side is pointwise nonnegative, by the definitions of $\lambda_\pm$-compatible and $\omega$-compatible almost complex structure. Thus $\mc{E}(u)\ge 0$.

If $\mathcal{E}(u)=0$, then $u^{-1}(X)=\emptyset$ (because $\omega$ is pointwise positive on $u^{-1}(X)$ wherever $du\neq 0$), and $u$ is everywhere tangent to the Reeb direction and the $\R$ direction outside of $X$ (by the definition of $\lambda_\pm$-compatible almost complex structure). This is impossible unless $u$ is empty.
\end{proof}

\begin{definition}
Let $(X,\omega)$ be a strong symplectic cobordism from $(Y_+,\lambda_+)$ to $(Y_-,\lambda_-)$ where $\lambda_+$ and $\lambda_-$ are nondegenerate, let $J\in\mc{J}(\overline{X})$, and let $x_1,\ldots,x_k\in\overline{X}$. Define $\mc{M}^J(\overline{X};x_1,\ldots,x_k)$ to be the set of $u\in\mc{M}^J(\overline{X})$ such that $x_1,\ldots,x_k\in\op{im}(u)$. Likewise, if $\alpha_\pm$ are orbit sets in $Y_\pm$, define $\mc{M}^J(\overline{X},\alpha_+,\alpha_-;x_1,\ldots,x_k)$ to be the set of $u\in\mc{M}^J(\overline{X},\alpha_+,\alpha_-)$ such that $x_1,\ldots,x_k\in\op{im}(u)$.
\end{definition}


\section{Max-min invariants of cobordisms}
\label{sec:maxmin}

As a precursor to the definition of the spectral invariant $c_k$ of contact three-manifolds, we now define a kind of spectral invariant $b_k$ of strong symplectic cobordisms, as a ``max-min'' of energy of holomorphic curves.

\begin{definition}
Let $(X,\omega)$ be a strong symplectic cobordism from $(Y_+,\lambda_+)$ to $(Y_-,\lambda_-)$ with $\lambda_+$ and $\lambda_-$ nondegenerate, and let $k$ be a nonnegative integer. Define
\begin{equation}
\label{eqn:defb}
b_k(X,\omega) = 
\sup_{\substack{J\in\mc{J}(\overline{X})\\ \mbox{\scriptsize $x_1,\ldots,x_k\in X$ distinct}}} \inf_{u\in\mc{M}^J(\overline{X};x_1,\ldots,x_k)} \mc{E}_+(u) \in [0,\infty].
\end{equation}
\end{definition}

\begin{lemma}[basic properties of $b_k$]
\label{lem:bkproperties}
Suppose below that $(X,\omega)$ is a strong symplectic cobordism from $(Y_+,\lambda_+)$ to $(Y_-,\lambda_-)$ with $\lambda_+$ and $\lambda_-$ nondegenerate.
\begin{description}
\item{(Conformality)}
If $r>0$ then
\[
b_k(X,r\omega) = r \cdot b_k(X,\omega).
\]
\item{(Increasing)}
\[
0 = b_0(X,\omega) < b_1(X,\omega) \le b_2(X,\omega) \le \cdots \le +\infty.
\]
\item{(Disjoint Union)} If $(X_i,\omega_i)$ is a strong symplectic cobordism between nondegenerate contact three-manifolds for $i=1,\ldots,m$, then
\[
b_k\left(\coprod_{i=1}^m(X_i,\omega_i)\right) = \max_{k_1+\cdots+k_m=k} \sum_{i=1}^m b_{k_i}(X_i,\omega_i).
\]
\item{(Sublinearity)}
\[
b_{k+l}(X,\omega) \le b_k(X,\omega) + b_l(X,\omega).
\]
\item{(Spectrality)}
If $\rho\equiv0$ on $H_2(X,\partial X$), e.g.\ if $(X,\omega)$ is exact, and if $b_k(X,\omega)$ is finite, then there exists an orbit set $\alpha_+$ in $Y_+$ with
\[
b_k(X,\omega) = \mc{A}(\alpha_+).
\]
\item{(Alternate Capacities)}
If $(X,\omega)$ is a Liouville domain, i.e.\ $(X,\omega)$ is exact and $Y_-=\emptyset$, then
\[
b_k(X,\omega) = c_k^{\op{Alt}}(X,\omega).
\]
\item{(Stripping\footnote{The idea of this terminology is that we can ``strip away'' part of a cobordism without increasing $b_k$.})}
If $(X_+,\omega)$ is a strong symplectic cobordism from $(Y_+,\lambda_+)$ to $(Y_0,\lambda_0)$, and if $(X_-,\omega_-)$ is a strong symplectic cobordism from $(Y_0,\lambda_0)$ to $(Y_-,\lambda_-)$, with $\lambda_+,\lambda_0,\lambda_-$ nondegenerate, then
\begin{equation}
\label{eqn:stripright}
b_k(X_-,\omega_-) \le b_k\left((X_-,\omega_-)\circ (X_+,\omega_+)\right).
\end{equation}
If in addition $\rho\equiv 0$ on $H_2(X_-,\partial X_-)$, e.g.\ the cobordism $(X_-,\omega_-)$ is exact, then
\begin{equation}
\label{eqn:stripleft}
b_k(X_+,\omega_+) \le b_k\left((X_-,\omega_-)\circ (X_+,\omega_+)\right).
\end{equation}
\end{description}
\end{lemma}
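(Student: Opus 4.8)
The properties split into a routine group and the one genuinely substantial property, \emph{Stripping}, and I would handle them in that order. \emph{Conformality} is a rescaling argument: replacing $\omega$ by $r\omega$ scales $\lambda_\pm$, every Reeb period and hence every action $\mc{A}$, and the homomorphism $\rho$ all by $r$, and identifies compatible almost complex structures on $\overline{X}$ with those for $r\omega$ together with their moduli of curves through prescribed interior points; since $\mc{E}_+$ is assembled from $\mc{A}$ and $\rho$, it scales by $r$, hence so does $b_k$. For \emph{Increasing}: $b_0=0$ because the empty holomorphic curve lies in every $\mc{M}^J(\overline{X})$ with $\mc{E}_+=0$, so every inner infimum is $0$; $b_k\le b_{k+1}$ because an extra point constraint shrinks $\mc{M}^J(\overline{X};x_1,\dots,x_k)$ and so cannot lower the infimum of $\mc{E}_+$; and $b_1>0$ by the monotonicity lemma for $J$-holomorphic curves, which bounds below the $\omega$-area of a nonconstant curve near an interior constraint point, combined with $\mc{E}_+(u)\ge\mc{E}(u)\ge\int_{u^{-1}(X)}\omega$ from the proof of Lemma~\ref{lem:energy}. \emph{Disjoint Union} and \emph{Sublinearity} both exploit that disconnected (and empty) domains are allowed: for the former, a curve in a disjoint union of completed cobordisms is a disjoint union of curves in the pieces, $\mc{E}_+$ and the point constraints distribute additively, and one optimizes over the finitely many splittings $k_1+\dots+k_m=k$; for the latter (nothing to prove if $b_k$ or $b_l$ is infinite), given $J$ and $k+l$ distinct interior points, the definition of $b_k$ applied to $(J;x_1,\dots,x_k)$ gives a curve $u'$ through the first $k$ points with $\mc{E}_+(u')\le b_k+\epsilon$, and similarly $u''$ with $\mc{E}_+(u'')\le b_l+\epsilon$; then $u'\sqcup u''\in\mc{M}^J(\overline{X};x_1,\dots,x_{k+l})$ with $\mc{E}_+(u'\sqcup u'')=\mc{E}_+(u')+\mc{E}_+(u'')$, and letting $\epsilon\to0$ after taking the inner infimum and outer supremum gives $b_{k+l}\le b_k+b_l$.

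For \emph{Spectrality}, when $\rho\equiv0$ we have $\mc{E}_+(u)=\mc{A}(\alpha_+)$ for $u\in\mc{M}^J(\overline{X},\alpha_+,\alpha_-)$, so all values of $\mc{E}_+$ lie in the set $\{\mc{A}(\alpha_+)\}$ of actions of orbit sets in $Y_+$, which is closed and discrete in $[0,\infty)$ since $\lambda_+$ is nondegenerate and $Y_+$ compact; hence each inner infimum is $+\infty$ or a minimum lying in that set, and $b_k$, a supremum of such values, is — if finite — the maximum of a finite subset of it, i.e.\ $\mc{A}(\alpha_+)$ for some orbit set. For \emph{Alternate Capacities}, a Liouville domain has $Y_-=\emptyset$ and $\rho\equiv0$, so $\mc{E}_+(u)=\mc{E}(u)=\mc{A}(\alpha_+(u))$ and the curves in question have only positive punctures; the definition \eqref{eqn:defb} then reduces verbatim to the max-min definition of $c_k^{\op{Alt}}$ in \cite{altech}, once one checks that the conventions for the completion, for compatible almost complex structures, and for energy agree, which they do by construction.

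The substantial property is \emph{Stripping}, which I would prove for both \eqref{eqn:stripright} and \eqref{eqn:stripleft} by neck-stretching along $Y_0$. Fix $\delta>0$ and $J_0\in\mc{J}(Y_0,\lambda_0)$; for \eqref{eqn:stripright} choose $J_-\in\mc{J}(\overline{X_-})$ and $k$ distinct interior points of $X_-$ realizing $b_k(X_-,\omega_-)$ within $\delta$, and for large $\tau$ build $J_\tau\in\mc{J}(\overline{X_-\circ X_+})$ that equals $J_-$ on $X_-$ and its adjacent ends, equals $J_0$ (translation-invariant) on a neck $[-\tau,\tau]\times Y_0$, and is some fixed $\omega_+$-compatible structure on $X_+$. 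If along some $\tau_n\to\infty$ the inner infimum over $\mc{M}^{J_{\tau_n}}(\overline{X_-\circ X_+};x_1,\dots,x_k)$ stayed below $b_k(X_-,\omega_-)-\delta$, take near-minimizing curves $u_n$, which then have uniformly bounded $\mc{E}_+$, and apply SFT compactness: the limit is a holomorphic building with one level in $\overline{X_-}$, one in $\overline{X_+}$, and symplectization levels over $Y_-$, $Y_0$, and $Y_+$; moreover the top orbit set and relative homology class of $u_n$ stabilize, so $\mc{E}_+(u_n)=\mc{E}_+(\mathbf{u})$ eventually. Since the constraint points are interior to $X_-$, only the $\overline{X_-}$-level $v^0$ can meet them, so after normalizing away nodes and ghost components $v^0\in\mc{M}^{J_-}(\overline{X_-};x_1,\dots,x_k)$ and $\mc{E}_+(v^0)\ge b_k(X_-,\omega_-)-\delta$; on the other hand, splitting $\rho$ of the composition via \eqref{eqn:rhocomposition} and applying the nonnegativity of energy (Lemma~\ref{lem:energy}) to the $\overline{X_+}$-level and to each symplectization level gives $\mc{E}_+(v^0)\le\mc{E}_+(\mathbf{u})=\lim_n\mc{E}_+(u_n)\le b_k(X_-,\omega_-)-\delta$, a contradiction. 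Hence $b_k(X_-\circ X_+)\ge\liminf_\tau(\text{inner infimum})\ge b_k(X_-,\omega_-)-\delta$, and $\delta$ was arbitrary. The proof of \eqref{eqn:stripleft} is identical after interchanging the roles of $X_-$ and $X_+$ (still stretching along $Y_0$, constraint points interior to $X_+$, extracted level $v^0$ in $\overline{X_+}$), with one extra ingredient: to still conclude $\mc{E}_+(v^0)\le\mc{E}_+(\mathbf{u})$ one must discard the contribution of $\rho$ coming from the $\overline{X_-}$-part of the building, which is licit precisely because $\rho\equiv0$ on $H_2(X_-,\partial X_-)$.

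The hard part will be \emph{Stripping}: arranging $J_\tau$ to be genuinely cobordism-compatible, controlling the level structure of the SFT-limit building, checking that the level through the constraint points is a bona fide element of the moduli space computing $b_k$ of the sub-cobordism, and tracking $\mc{E}_+$ additively across the levels so that the energy inequality ends up pointing the right way — the exactness hypothesis in \eqref{eqn:stripleft} being exactly what controls the sign of the $\rho$-term in that last step. The remaining ingredients — the monotonicity lemma, discreteness of the action spectrum of a nondegenerate contact form, and SFT compactness — are standard inputs.
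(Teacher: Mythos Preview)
Your proposal is correct and follows essentially the same approach as the paper: the easy properties are dispatched directly from the definition (the paper is terser, invoking only Lemma~\ref{lem:energy} for $b_1>0$ where you spell out the monotonicity lemma), and \emph{Stripping} is proved by neck-stretching along $Y_0$ and extracting a limit curve in the relevant sub-cobordism with controlled $\mc{E}_+$. The only notable technical difference is that the paper organizes the Stripping argument as a direct estimate (for arbitrary $(J_-,x_1,\dots,x_k)$, produce $u_-$ with $\mc{E}_+(u_-)\le b_k(X_-\circ X_+)+\epsilon$) and cites a currents-compactness lemma from \cite{altech} rather than full SFT compactness, whereas you run the same idea contrapositively and invoke holomorphic buildings; both routes yield the same energy bookkeeping via \eqref{eqn:rhocomposition} and Lemma~\ref{lem:energy}.
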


\begin{proof}
The Conformality property is immediate from the definition.

The Increasing property is immediate from the definition except for the assertion that $b_1(X,\omega)>0$, which follows from Lemma~\ref{lem:energy}.

The Disjoint Union and Sublinearity properties are immediate from the definition.

The Spectrality property holds because for any $u$ that arises in \eqref{eqn:defb}, the upper energy $\mc{E}_+(u)$, if finite, is the action of some orbit set; and by the nondegeneracy hypothesis on $\lambda_+$, there are only finitely many orbit sets below any given action level.

The Alternative Capacities property holds because in the special case when $(X,\omega)$ is a Liouville domain (with the contact form on the boundary nondegenerate), the definition of $b_k(X,\omega)$ is identical to the definition of $c_k^{\op{Alt}}(X,\omega)$ in \cite{altech}.

The nontrivial part of this lemma is the Stripping property. We begin with the first inequality \eqref{eqn:stripright}. Let $\epsilon>0$; it is enough to show that
\[
b_k(X_-) < b_k(X_-\circ X_+) + \epsilon.
\]
Choose $J_-\in\mc{J}(\overline{X_-})$ and $x_1,\ldots,x_k\in X_-$ distinct.  It is enough to show that there exists $u_-\in\mc{M}^{J_-}(\overline{X_-};x_1,\ldots,x_k)$ with
\begin{equation}
\label{eqn:Eplusuminus}
\mc{E}_+(u_-) \le b_k(X_-\circ X_+) + \epsilon.
\end{equation}

Since $J_-$ is cobordism-compatible, it agrees on $[0,\infty)\times Y_0$ with the restriction of an almost complex structure $J_0\in\mc{J}(Y_0,\lambda_0)$. Choose an almost complex structure $J_+\in\mc{J}(\overline{X_+})$ which agrees with $J_0$ on $(-\infty,0]\times Y_0$, and also in a neighborhood $N$ of $Y_0$ in $X_+$ identified with $([0,\delta)\times Y_0,d(e^s\lambda_0))$ for some $\delta>0$ as in \eqref{eqn:N-}.

For each positive integer $n$, we can choose an almost complex structure $J_n\in\mc{J}(\overline{X_-\circ X_+})$ such that:
\begin{itemize}
\item
$J_n$ agrees with $J_-$ on $((-\infty,0]\times Y_-)\cup_{Y_-}X_-$.
\item
$J_n$ agrees with $J_+$ on $(X_+\setminus N)\cup_{Y_+}([0,\infty)\times Y_+)$.
\item
There is a biholomorphism
\begin{equation}
\label{eqn:biholomorphism}
(N,J_n) \stackrel{\simeq}{\longrightarrow} ([0,n)\times Y_0,J_0).
\end{equation}
\end{itemize}

By the definition of $b_k$, for each $n$ we can find a holomorphic curve
\[
u_n\in\mc{M}^{J_n}(\overline{X_-\circ X_+},\alpha_+(n),\alpha_-(n);x_1,\ldots,x_k)
\]
for some orbit sets $\alpha_\pm(n)$ in $Y_\pm$ such that
\begin{equation}
\label{eqn:Eplusun}
\mc{E}_+(u_n) \le b_k(X_-\circ X_+) + \epsilon.
\end{equation}
It follows that we have an a priori energy bound
\[
\mc{E}(u_n) \le b_k(X_-\circ X_+) + \epsilon.
\]

By this energy bound and the Gromov compactness argument in \cite[Lem.\ 3]{altech}, we can pass to a subsequence such that:
\begin{itemize}
\item
$[u_n]\in H_2(X_-\circ X_+, \partial(X_-\circ\partial X_+))$ is independent of $n$; denote this class by $A$.
\item
The intersection of $u_n$ with $((-\infty,0]\times Y_-)\cup_{Y_-}X_-\cup_{Y_0}N$, regarded via the biholomorphism \eqref{eqn:biholomorphism} as a curve in  $((-\infty,0]\times Y_-)\cup_{Y_-}X_-\cup_{Y_0}([0,n)\times Y_0)$, converges as a current on compact sets to a curve
\[
u_-\in\mc{M}^{J_-}(\overline{X_-},\alpha_{0-},\alpha_-;x_1,\ldots,x_k),
\]
where $\alpha_{0-}$ is an orbit set in $Y_0$, and $\alpha_-$ is an orbit set in $Y_-$ with $\mc{A}(\alpha_-)\ge\mc{A}(\alpha_-(n))$ for each $n$. Let $A_-=[u_-]\in H_2(X_-,\partial X_-)$.
\item
The intersection of $u_n$ with $X_+\cup_{Y_+}([0,\infty)\times Y_+)$, regarded via the biholomorphism \eqref{eqn:biholomorphism} as a curve in $((-n+\delta,0]\times Y_0)\cup_{Y_0} X_+ \cup_{Y_+}([0,\infty)\times Y_+)$, converges as a current on compact sets to a curve
\[
u_+\in\mc{M}^{J_+}(\overline{X_+},\alpha_+,\alpha_{0+}),
\]
where $\alpha_+$ is an orbit set in $Y_+$ with $\mc{A}(\alpha_+(n)) \ge \mc{A}(\alpha_+)$ for each $n$, and $\alpha_{0+}$ is an orbit set in $Y_0$ with $\mc{A}(\alpha_{0+})\ge \mc{A}(\alpha_{0-})$. Let $A_+=[u_+]\in H_2(X_+,\partial X_+)$.
\item
$A=A_-+A_+$.
\end{itemize}

Using Lemma~\ref{lem:energy} and equation \eqref{eqn:rhocomposition}, we now compute that for any $n$,
\[
\begin{split}
\mc{E}_+(u_-) &= \rho(A_-) + \mc{A}(\alpha_{0-})\\
&\le \rho(A_-) + \mc{A}(\alpha_{0+})\\
&\le \rho(A_-) + \mc{A}(\alpha_{0+}) + \mc{E}(u_+)\\
&= \rho(A_-) + \rho(A_+) + \mc{A}(\alpha_+)\\
&= \rho(A) + \mc{A}(\alpha_+)\\
&\le \rho(A) + \mc{A}(\alpha_+(n))\\
&= \mc{E}_+(u_n)\\
&\le b_k(X_-\circ X_+) + \epsilon.
\end{split}
\]
Thus $u_-$ satisfies the desired property \eqref{eqn:Eplusuminus}.

Finally, to prove the inequality \eqref{eqn:stripleft}, let $\epsilon>0$, let $J_+\in\mc{J}(\overline{X_+})$, and let $x_1,\ldots,x_k\in X_+$ be distinct; it is enough to show that there exists $u_+\in\mc{M}^{J_+}(\overline{X_+})$ with
\begin{equation}
\label{eqn:Eplusuplus}
\mc{E}_+(u_+) \le b_k(X_-\circ X_+) + \epsilon.
\end{equation}
As above, for a suitable sequence of almost complex structures $J_n\in\mc{J}(\overline{X_-\circ X_+})$, we can find curves $u_n\in\mc{M}^{J_n}(\overline{X_-\circ X_+}; x_1,\ldots,x_k)$ satisfying \eqref{eqn:Eplusun}, and we can pass to a subsequence so that the analogues of the above four bullet points hold. Using the hypothesis that $\rho(A_-)=0$, we then have
\[
\begin{split}
\mc{E}_+(u_+) &= \rho(A_+) + \mc{A}(\alpha_+)\\
&= \rho(A) + \mc{A}(\alpha_+)\\
&\le \rho(A) + \mc{A}(\alpha_+(n))\\
&= \mc{E}_+(u_n)\\
&\le b_k(X_-\circ X_+)+\epsilon.
\end{split}
\]
Thus $u_+$ satisfies the desired property \eqref{eqn:Eplusuplus}.
\end{proof}

\begin{remark}
Although we do not need it in this paper, in the special case when $Y_-=\emptyset$, the Stripping property has the following implication. Writing $X=X_-$ and $X'=X_-\circ X_+$, the inequality \eqref{eqn:stripright} in this case can be restated as follows: If $(X,\omega)$ and $(X',\omega')$ are strong symplectic cobordisms with no negative boundary, and if there exists a symplectic embedding $(X,\omega) \to (\op{int}(X'),\omega')$, then $b_k(X,\omega) \le b_k(X',\omega')$. Thus $b_k$ can be used to define, for each $k$, a symplectic capacity\footnote{We have not quite proved that $b_k$ is a symplectic capacity, because we do not know if it monotone under symplectic embeddings $X\to X'$ that hit $\partial X'$. We would also want to remove the hypothesis that the contact form on the boundary is nondegenerate. One can fix these issues for example by definining the capacity of $X$ to be the supremum of $b_k(X')$ where $X'$ symplectically embeds into the interior of $X$ and the contact form on the boundary of $X'$ is nondegenerate.} for strong symplectic cobordisms with no negative boundary. If $X$ is not a Liouville domain, then it follows from the definition of $c_k^{\op{Alt}}$ in \cite{altech} and Stripping that $b_k(X) \ge c_k^{\op{Alt}}(X)$.
\end{remark}


\section{Definition and properties of alternative spectral invariants}
\label{sec:main}

We are now in a position to define the alternative spectral invariants of contact three-manifolds.

\subsection{The definition}

\begin{definition}
Let $(Y,\lambda)$ be a closed contact three-manifold, and let $k$ be a nonnegative integer. Define $c_k(Y,\lambda)\in[0,\infty]$ as follows.
\begin{itemize}
\item
If $\lambda$ is nondegenerate, define
\begin{equation}
\label{eqn:cknondeg}
c_k(Y,\lambda) = \sup_{R>0}b_k([-R,0]\times Y).
\end{equation}
\item
In the general case, define
\begin{equation}
\label{eqn:ckgeneral}
c_k(Y,\lambda) = \inf_{f:Y\to\R^{>0}}c_k\left(Y,e^f\lambda\right)
 = \sup_{f:Y\to\R^{<0}}c_k\left(Y,e^f\lambda\right),
 \end{equation}
where in the infimum and supremum we require that $e^f\lambda$ is nondegenerate. We will see in \S\ref{sec:pop} that the two definitions in \eqref{eqn:ckgeneral} agree, and also that they agree with \eqref{eqn:cknondeg} in the nondegenerate case.
\end{itemize}
\end{definition}

\begin{remark}
\label{rem:nondecreasing}
Note that $b_k([-R,0]\times Y)$ is a nondecreasing function of $R$, since increasing $R$ enlarges the set of almost complex structures and points over which we take the supremum in \eqref{eqn:defb}. Thus for any $R_0\ge 0$, instead of \eqref{eqn:cknondeg} we also have
\[
c_k(Y,\lambda) = \sup_{R>R_0}b_k([-R,0]\times Y).
\]
\end{remark}

\begin{remark}
One might be tempted to define $c_k(Y,\lambda)$ more simply as a max-min of positive energy of $J$-holomorphic curves in $\R\times Y$ for $J\in\mc{J}(Y,\lambda)$. However it is not clear if such a definition would satisfy some of the key properties of $c_k$ such as the Spectral Gap Closing Bound. In particular, almost complex structures in $\mc{J}(Y,\lambda)$ are not sufficiently flexible for the proof of Lemma~\ref{lem:gaplb} below to work. The definition \eqref{eqn:cknondeg} allows more general almost complex structures on $\R\times Y$, which do not satisfy the conditions for $\lambda$-compatibility in $[-R,0]\times Y$.
\end{remark}

\subsection{Proofs of properties}
\label{sec:pop}

\begin{lemma}
\label{lem:gaplb}
Suppose $f_1,f_2:Y\to\R$ are smooth functions with $f_1 < f_2$ such that the contact forms $e^{f_i}\lambda$ are nondegenerate. Let $(X,\omega)$ be a Liouville domain, and suppose there exists a symplectic embedding
\[
(X,\omega) \longrightarrow M_{f_1,f_2} \eqdef \{(s,y)\in\R\times Y \mid f_1(y) < s < f_2(y)\}.
\]
Then for any nonnegative integers $k$ and $l$, we have
\begin{equation}
\label{eqn:gaplb}
c_{k+l}\left(Y,e^{f_2}\lambda\right) \ge c_k\left(Y,e^{f_1}\lambda\right) + c_l^{\op{Alt}}(X,\omega).
\end{equation}
\end{lemma}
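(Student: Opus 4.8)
\emph{Plan.} I would sandwich a suitable exact symplectic cobordism between the two sides, using only the Stripping, Disjoint Union, and Alternate Capacities properties of $b_k$ from Lemma~\ref{lem:bkproperties}. The substantive analytic input (neck stretching and Gromov compactness) is already packaged inside the Stripping property, so the argument is essentially a bookkeeping argument with cobordisms.

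\emph{The cobordism.} For $R>0$ set
\[
Z_R \eqdef \{(s,y)\in\R\times Y \mid f_1(y)-R\le s\le f_2(y)\},
\]
with the symplectic form $d(e^s\lambda)$; this is an exact symplectic cobordism from $(Y,e^{f_2}\lambda)$ to $(Y,e^{f_1-R}\lambda)$, and both contact forms are nondegenerate since $e^{f_1-R}\lambda=e^{-R}(e^{f_1}\lambda)$. Inside $Z_R$ there are two disjoint pieces. The first is the ``long cylinder'' $L_R\eqdef\{f_1(y)-R\le s\le f_1(y)\}$, which the fibrewise translation $(s',y)\mapsto(s'+f_1(y),y)$ identifies symplectically with $([-R,0]\times Y,d(e^{s'}e^{f_1}\lambda))$; hence $b_k(L_R)=b_k([-R,0]\times Y)$ for the contact form $e^{f_1}\lambda$, and $\sup_{R>0}b_k(L_R)=c_k(Y,e^{f_1}\lambda)$ by \eqref{eqn:cknondeg}. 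The second is the image $\iota(X)$ of the given symplectic embedding $\iota\colon X\hookrightarrow M_{f_1,f_2}$; since $\iota(\overline X)$ is a compact subset of the open set $M_{f_1,f_2}$, it lies in $\{f_1+c\le s\le f_2-c\}$ for some $c>0$, so it is disjoint from $L_R$ and from $\{s=f_1\}$ and $\{s=f_2\}$. After a $C^\infty$-small perturbation of the Liouville structure of $X$ near $\partial X$ — which affects $c_l^{\op{Alt}}(X)$ by at most a prescribed $\epsilon>0$ and preserves the above disjointness — I may assume the contact form $\lambda_X$ induced on $\partial X$ is nondegenerate. Because $\iota(\partial X)$ is of contact type (extend a primitive of $\omega$ across $\iota(\partial X)$ using the relative Poincar\'e lemma), the form $d(e^s\lambda)$ restricts to a symplectization of $\lambda_X$ near $\iota(\partial X)$; so $W'\eqdef\{f_1\le s\le f_2\}\setminus\op{int}(\iota(X))$ is a strong symplectic cobordism from $(Y,e^{f_2}\lambda)$ to $(Y,e^{f_1}\lambda)\sqcup(\partial X,\lambda_X)$, with $\iota(\partial X)$ a negative end since the Liouville vector field of $X$ points out of $\iota(X)$. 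By construction,
\[
Z_R = W'\circ\bigl(L_R\sqcup\iota(X)\bigr).
\]

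\emph{The two estimates.} Peeling off the top piece $W'$ via the first Stripping inequality \eqref{eqn:stripright} (which needs no exactness hypothesis) gives $b_{k+l}(L_R\sqcup\iota(X))\le b_{k+l}(Z_R)$. By Disjoint Union, $b_{k+l}(L_R\sqcup\iota(X))\ge b_k(L_R)+b_l(\iota(X))$, and by Alternate Capacities (valid now that $\lambda_X$ is nondegenerate) together with symplectomorphism invariance of $b_l$, $b_l(\iota(X))=c_l^{\op{Alt}}(X)$. Hence $b_{k+l}(Z_R)\ge b_k(L_R)+c_l^{\op{Alt}}(X)$, and taking $\sup_{R>0}$ gives $\sup_{R>0} b_{k+l}(Z_R)\ge c_k(Y,e^{f_1}\lambda)+c_l^{\op{Alt}}(X)$. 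For the matching upper bound, fix $R$ and choose $R'$ with $f_2-R'<f_1-R$ everywhere; then $\{f_2(y)-R'\le s\le f_2(y)\}\cong([-R',0]\times Y,d(e^{s'}e^{f_2}\lambda))$ is the composition of an exact cobordism placed below $Z_R$, so the second Stripping inequality \eqref{eqn:stripleft} gives $b_{k+l}(Z_R)\le b_{k+l}([-R',0]\times Y)\le c_{k+l}(Y,e^{f_2}\lambda)$. Combining the two estimates and letting $\epsilon\to0$ yields $c_{k+l}(Y,e^{f_2}\lambda)\ge c_k(Y,e^{f_1}\lambda)+c_l^{\op{Alt}}(X)$.

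\emph{Main obstacle.} As noted, the deep step is the Stripping property, already proved. The care in the above is purely organizational: getting the boundary orientations right so that $Z_R=W'\circ(L_R\sqcup\iota(X))$ is genuinely a composition of strong symplectic cobordisms; arranging nondegeneracy of every contact form that appears (the perturbation of $X$, together with continuity of $c_l^{\op{Alt}}$ under $C^0$-small perturbations); and verifying that $\omega$ restricts to a symplectization collar near $\iota(\partial X)$ so that $W'$ is well defined. I expect the contact-type-collar point and the nondegeneracy perturbation of $\lambda_X$ to be the fiddliest parts, though both are routine.
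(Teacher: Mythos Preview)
Your proposal is correct and follows essentially the same approach as the paper: strip off $M_{f_1,f_2}\setminus\op{int}(X)$ from the top using \eqref{eqn:stripright}, apply Disjoint Union and Alternate Capacities, and relate the remaining cylinder pieces to the definitions of $c_k$ and $c_{k+l}$ via \eqref{eqn:stripleft}. The only differences are cosmetic: the paper starts from the trivial cobordism $M_{f_2-R_0-R,f_2}\cong[-R_0-R,0]\times Y$ for $e^{f_2}\lambda$ (so the upper bound by $c_{k+l}(Y,e^{f_2}\lambda)$ is immediate from the definition) and then strips down, whereas you work with $Z_R=M_{f_1-R,f_2}$ and add a separate step embedding it in a longer trivial cobordism; also, your composition $Z_R=W'\circ(L_R\sqcup\iota(X))$ has the factors reversed relative to the paper's convention in Definition~\ref{def:composition}, though your application of \eqref{eqn:stripright} is correct regardless.
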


\begin{proof}
By \eqref{eqn:ckgeneral} and the $C^0$-continuity of $c_k^{\op{Alt}}$ proved in \cite{altech}, we can assume without loss of generality that the contact form on the boundary of $(X,\omega)$ is nondegenerate.

Choose $R_0>\max(f_2-f_1)$ and suppose that $R>0$. By the Stripping property \eqref{eqn:stripright}, applied to $X_-=M_{f_2-R_0-R,f_1}\sqcup X$ and $X_+=M_{f_1,f_2}\setminus\op{int}(X)$, we have 
\[
b_{k+l}\left(M_{f_2-R_0-R,f_2}\right) \ge b_{k+l}\left(M_{f_2-R_0-R,f_1}\sqcup X\right).
\]
By the Disjoint Union property in Lemma~\ref{lem:bkproperties}, we have
\[
b_{k+l}\left(M_{f_2-R_0-R,f_1}\sqcup X\right) 
\ge b_k(M_{f_2-R_0-R,f_1}) + b_l(X).
\]
By the Stripping property \eqref{eqn:stripleft} applied to $X_-=M_{f_2-R_0-R,f_1-R}$ and $X_+ = M_{f_1-R,f_1}$, we have
\[
b_k(M_{f_2-R_0-R,f_1}) 
\ge b_k(M_{f_1-R,f_1}) .
\]
By the Alternative Capacities property in Lemma~\ref{lem:bkproperties}, we have
\[
 b_l(X) = c_l^{\op{Alt}}(X).
\]
Combining the above four lines gives
\[
b_{k+l}\left(M_{f_2-R_0-R,f_2}\right) 
\ge
b_k(M_{f_1-R,f_1}) + c_l^{\op{Alt}}(X).
\]
Since $R>0$ was arbitrary, taking the supremum over $R>0$ and using Remark~\ref{rem:nondecreasing} proves \eqref{eqn:gaplb}.
\end{proof}

\begin{proof}[Proof of Theorem~\ref{thm:basicproperties}.]
Suppose to start that all contact forms under discussion are nondegenerate.

The Conformality, Increasing, Disjoint Union, Sublinearity, and Spectrality properties follow from the corresponding properties of $b_k$ in Lemma~\ref{lem:bkproperties}.

The Monotonicity property in the case $f>0$ follows from Lemma~\ref{lem:gaplb} with $X=\emptyset$ and $l=0$.

To prove the Liouville Domains property, let $V$ denote the Liouville vector field on $X$. For any $R>0$, the time $R$ flow of $V$ defines a symplectomorphism of a neighborhood $N$ of $Y$ in $X$ with $(-R,0]\times Y$. Then $X$ can be regarded as the composition of the exact cobordism $[-R,R]\times Y$ from $(Y,\lambda)$ to $(Y,e^{-R}\lambda)$, with the exact cobordism $X\setminus N$ from $(Y,e^{-R}\lambda)$ to the empty set. We now have
\[
b_k([-R,0]\times Y) \le b_k(X,\omega) = c_k^{\op{Alt}}(X,\omega),
\]
by the Stripping and Alternate Capacities properties in Lemma~\ref{lem:bkproperties}. Taking the supremum over $R$ gives $c_k(Y,\lambda) \le c_k^{\op{Alt}}(X,\omega)$.

It follows from Monotonicity and Conformality in the cases proved above that the definition \eqref{eqn:ckgeneral} is valid and agrees with the definition \eqref{eqn:cknondeg} in the nondegenerate case, and that the $C^0$-Continuity property holds. This continuity then implies that all the other properties we have proved so far also hold without assuming nondegeneracy, and that Monotonicity extends to the case $f\ge 0$. In addition, Lemma~\ref{lem:gaplb} extends to the case where $f_1\le f_2$ and $e^{f_i}\lambda$ may be degenerate.

To prove the Sphere property, recall from \cite{altech} that $c_k^{\op{Alt}}(B^4(a))=da$, where $d$ is given by \eqref{eqn:d}. Thus $c_k(\partial B^4(a))\le da$ by the Liouville Domains property which we just proved. The reverse inequality follows from a similar argument to the proof in \cite{altech} that $c_k^{\op{Alt}}(B^4(a))\ge da$.

To prove the Asymptotic Lower Bound property, we apply Lemma~\ref{lem:gaplb} where $(f_1,f_2)=(-R,0)$ for some $R>0$; $X$ is a disjoint union of balls; and $k=0$. By the calculation in \cite[\S3.2]{vc}, we obtain
\[
\limsup_{l\to\infty}\frac{c_l(Y,\lambda)^2}{l} \ge 2\left(1-e^{-R}\right)\op{vol}(Y,\lambda).
\]
Since $R>0$ can be taken arbitrarily large, the claim \eqref{eqn:alb} follows.

Finally, let us prove the Spectral Gap Closing Bound. Let $k>0$ and suppose that $c_k(Y,\lambda)\le L<\infty$. Suppose that $\op{Close}^L(Y,\lambda)>\delta$.  We need to show that
\begin{equation}
\label{eqn:biggap}
c_k(Y,\lambda) - c_{k-1}(Y,\lambda) \ge \delta.
\end{equation}

Since $\op{Close}^L(Y,\lambda)>\delta$, we can find a nonempty open set $\mc{U}\subset Y$, and a positive deformation $\{\lambda_\tau=e^{f_\tau}\lambda\}_{\tau\in[0,1]}$ of $\lambda$ supported in $\mc{U}$, such that $\op{width}(\{\lambda_\tau\})>\delta$, and the contact form $\lambda_\tau$ does not have any Reeb orbit intersecting $\mc{U}$ with action at most $L$ for any $\tau\in[0,1]$. By $C^0$-Continuity and Spectrality, the number $c_k(Y,\lambda_\tau)$ does not depend on $\tau$. Here we use the fact that the set of actions of orbit sets has measure zero, as explained in \cite{irie}. Thus
\begin{equation}
\label{eqn:combine1}
c_k(Y,\lambda) = c_k\left(Y,e^{f_1}\lambda\right).
\end{equation}
Since $\op{width}(\{\lambda_\tau\})>\delta$, we can find a symplectic embedding $B^4(\delta)\to M_{0,f_1}$. It then follows from Lemma~\ref{lem:gaplb} that
\begin{equation}
\label{eqn:combine2}
\begin{split}
c_k\left(Y,e^{f_1}\lambda\right) &\ge c_{k-1}(Y,\lambda) + c_1^{\op{Alt}}(B^4(\delta)) \\
&= c_{k-1}(Y,\lambda) + \delta.
\end{split}
\end{equation}
Combining \eqref{eqn:combine1} and \eqref{eqn:combine2} proves the desired inequality \eqref{eqn:biggap}.
\end{proof}

\begin{remark}
Generalizing the Monotonicity property, one might ask if the existence of an exact symplectic cobordism from $(Y_+,\lambda_+)$ to $(Y_-,\lambda_-)$ implies that $c_k(Y_-,\lambda_-)\le c_k(Y_+,\lambda_+)$. We do not know whether this is true.
\end{remark}


\section{Some calculations}
\label{sec:calculus}

\subsection{Star-shaped domains}
\label{sec:starshaped}

\begin{proof}[Proof of Theorem~\ref{thm:ballgap}.]
Write $d=\floor{La^{-1}}$ and let $k=(d^2+3d)/2$. By Monotonicity and equation \eqref{eqn:ckball}, we have
\[
c_k(Y) \le c_k(B^4(a)) = da \le L. 
\]
By the Increasing property in Theorem~\ref{thm:basicproperties}, $c_i(Y)\le L$ whenever $i\le k$. Then by Definition~\ref{def:gap},
\[
\op{Gap}^L(Y) \le \min_{i=1,\ldots,k}(c_i(Y) - c_{i-1}(Y)).
\]
Thus
\[
\op{Gap}^L(Y) \le \frac{1}{k}\sum_{i=1}^k(c_i(Y) - c_{i-1}(Y)) = \frac{c_k(Y)}{k} \le \frac{da}{k} = \frac{2a}{d+3} = \frac{2a}{\floor{La^{-1}}+3}.
\]
We are now done by the inequality \eqref{eqn:CloseGap}.
\end{proof}

\subsection{Preliminaries on toric domains}
\label{sec:toricprelim}

Consider the moment map $\mu:\C^2\to\R^2_{\ge 0}$ defined by
\[
\mu(z) = \left(\pi|z_1|^2,\pi|z_2|^2\right).
\]
If $\Omega$ is a domain in $\R^2_{\ge 0}$, define the associated {\em toric domain\/}
\[
X_\Omega = \mu^{-1}(\Omega).
\]
A basic example is when $\Omega$ is the triangle with vertices at $(0,0)$, $(a,0)$, and $(0,b)$ for some $a,b>0$. In this case, $X_\Omega$ is the ellipsoid
\[
E(a,b) =\left\{z\in\C^2 \;\bigg|\; \frac{\pi|z_1|^2}{a}+ \frac{\pi|z_2|^2}{b} \le 1\right\}.
\]

More generally, suppose that the boundary of $\Omega$ consists of the line segment from $(0,0)$ to $(a,0)$ for some $a>0$, the line segment from $(0,0)$ to $(0,b)$ for some $b>0$, and a smooth curve $\partial_+\Omega$ from $(0,b)$ to $(a,0)$ which is transverse to the axes. Then $X_\Omega$ is a smooth domain in $\R^4$. If we further assume that $\partial_+\Omega$ is transverse to the radial vector field on $\R^2$, then $\partial X_\Omega$ is star-shaped. In this case the simple Reeb orbits on $\partial X_\Omega$ are described as follows:
\begin{itemize}
\item
$\mu^{-1}((a,0))$ is a simple Reeb orbit of action $a$.
\item
$\mu^{-1}((0,b))$ is a simple Reeb orbit of action $b$.
\item
Let $(x,y)\in\op{int}(\partial_+\Omega)$, and suppose that the outward normal vector to $\partial_+\Omega$ at $(x,y)$ is a multiple of $(m,n)$ where $m,n$ are relatively prime nonnegative integers. Then $\mu^{-1}((x,y))$ is foliated by simple Reeb orbits, each of which has action
\begin{equation}
\label{eqn:mxny}
\mc{A} = mx + ny.
\end{equation}
\end{itemize}
See e.g.\ \cite[\S2.2]{gh} for detailed calculations.

One more fact we need: For any domain $\Omega\subset\R^2_{\ge 0}$, suppose that $\Omega$ contains a triangle $T$ which is equivalent, by the action of an element of $\op{SL}_2(\Z)$ and/or a translation, to the triangle with vertices $(0,0)$, $(\delta,0)$, and $(0,\delta)$, for some $\delta>0$. Then by the ``Traynor trick'' \cite{traynor}, there exists a symplectic embedding $\op{int}(B^4(\delta)) \to \op{int}\left(X_\Omega\right)$. Hence the Gromov width $c_{\op{Gr}}\left(\op{int}\left(X_\Omega\right)
\right)\ge \delta$.

\subsection{Irrational ellipsoids}
\label{sec:ellipsoid}

\begin{proof}[Proof of Theorem~\ref{thm:ellipsoid}.]
By equation \eqref{eqn:CloseGap}, we need to show that
\begin{align}
\label{eqn:egap}
\op{Gap}^L(\partial E(a,1)) &\le \min(am_--n_-, \; n_+-am_+),\\
\label{eqn:eclose}
\op{Close}^L(\partial E(a,1)) &\ge \min(am_--n_-, \; n_+-am_+).
\end{align}

We begin with \eqref{eqn:egap}. An ellipsoid $E(a,b)$ is an example of both a ``convex toric domain'' and a ``concave toric domain''; see e.g.\ \cite{altech} for definitions. By Remark~\ref{rem:toric}, $c_k(\partial E(a,b))$ agrees with $c_k^{\op{Alt}}(E(a,b))$. By the combinatorial formula\footnote{Strictly speaking, the proof of \cite[Thm.\ 15]{altech} uses a bit of Seiberg-Witten theory. To avoid this, one can instead use the similar combinatorial formula for $c_k^{\op{Alt}}$ of a convex toric domain in \cite[Thm.\ 9]{altech} and a slight modification of the calculation in \cite[Ex.\ 1.23]{concave}.} for $c_k^{\op{Alt}}$ of a concave toric domain in \cite[Thm.\ 15]{altech} and the calculation in \cite[Ex.\ 1.23]{concave}, it follows that
\begin{equation}
\label{eqn:nab}
c_k(\partial E(a,b)) = N_k(a,b),
\end{equation}
where $(N_k(a,b))_{k\ge 0}$ denotes the sequence of nonnegative integer linear combinations of $a$ and $b$, listed in nondecreasing order with repetitions\footnote{This is also the sequence of ECH capacities of $E(a,b)$, see e.g.\ \cite{qech}, but we are avoiding using ECH here.}.

We are interested in the case when $a$ is irrational and $b=1$. Recall that $n_-/m_-<a$, $am_-\le L$, $n_+/m_+>a$, and $n_+\le L$. It follows that the numbers $am_-, am_+, n_-, n_+$ are all less than or equal to $L$; and by \eqref{eqn:nab} they all appear in the sequence $(c_k(\partial E(a,1))_{k\ge 0}$. Hence any positive difference between two of these numbers is greater than or equal to $\op{Gap}^L(\partial E(a,1))$. The inequality \eqref{eqn:egap} follows.

We now prove \eqref{eqn:eclose}. Let $\mc{U}$ denote the complement in $\partial E(a,1)$ of the two simple Reeb orbits $\mu^{-1}((a,0))$ and $\mu^{-1}((0,1))$. It is enough to show that for any $\delta$ less than the right hand side of \eqref{eqn:eclose}, there is a positive deformation $\{\lambda_\tau\}_{\tau\in[0,1]}$ of the contact form on $E(a,1)$ supported in $\mc{U}$, of width at least $\delta$, such that $\lambda_\tau$ does not have any Reeb orbit of action $\le L$ intersecting $\mc{U}$ for any $\tau\in[0,1]$.

Let $\Omega_0$ denote the triangle with vertices $(0,0)$, $(a,0)$, and $(0,1)$, so that $X_{\Omega_0}=E(a,1)$.  Let $L_1$ be the line through $(0,1)$ with slope $-m_+/n_+$. Let $L_2$ be the line through $(a,0)$ with slope $-m_-/n_-$. Since $L_1$ has slope greater than $-1/a$ and $L_2$ has slope less than $-1/a$, these lines intersect at a point $p$ in the first quadrant of the plane outside of $\Omega_0$. Let $T$ denote the triangle bounded by $L_1$, $L_2$, and the line segment from $(0,1)$ to $(a,0)$.

Consider a star-shaped toric domain $X_{\Omega}$ such that:
\begin{description}
\item{(*)}
The curve $\partial_+\Omega$ starts at $(0,1)$ and ends at $(a,0)$. Its slope starts at $-1/a$ and stays constant for a short time, then rapidly increases to an irrational number less than $-m_+/n_+$, then stays constant, then rapidly decreases to an irrational number larger than $-m_-/n_-$ then stays constant, then near its final endpoint rapidly increases back to $-1/a$ and stays constant.
\end{description}
Note that we necessarily have $\Omega_0\subset\Omega$ and $\Omega\setminus\Omega_0\subset T$. Also it is possible to choose $\Omega$ so that $\Omega\setminus\Omega_0$ is arbitrarily $C^0$-close to $T$.

For $\Omega$ satisfying condition (*) above, it follows using equation \eqref{eqn:mxny} that $\partial X_{\Omega}$ does not have any simple Reeb orbit of action less than or equal to $L$, except possibly for the two orbits $\mu^{-1}((0,1))$ and $\mu^{-1}((a,0))$. To show just one part of this calculation: Consider a simple Reeb orbit on $\partial X_{\Omega}$ arising from a point $(x,y)$ on the initial bend of $\partial_+\Omega$ near $(0,1)$ where the outward normal vector is proportional to $(m,n)$, where $m,n$ are relatively prime integers with $n>0$. Then the slope of $\partial_+\Omega$ at $(x,y)$ is $-m/n$. Since this slope satisfies $-1/a < -m/n < -m_+/n_+$, the minimality condition in the definition of $(m_+,n_+)$ implies that $n>L$. Then the action \eqref{eqn:mxny} is also greater than $L$ provided that $(x,y)$ is sufficiently close to $(0,1)$.

We can find a one-parameter family $\{\Omega_\tau\}_{\tau\in[0,1]}$ where $\Omega_0$ is the triangle specified above, $\Omega_\tau$ satisfies condition (*) above for each $\tau>0$, and $\Omega_1\setminus\Omega_0$ is $C^0$-close to $T$. For each $\tau$, there is a canonical contactomorphism
\[
\partial E(a,1) = \partial X_{\Omega_0} \stackrel{\simeq}{\longrightarrow} \partial X_{\Omega_\tau}
\]
defined by identifying pairs of points that are on the same ray in $\R^4$. Thus the family of contact manifolds $\{\partial X_{\Omega_\tau}\}$ is identified with a positive deformation $\{\lambda_\tau\}$ of $\partial E(a,1)$ supported in $\mc{U}$. Moreover, there is a canonical symplectomorphism
\[
M_{\lambda_1} \simeq \op{int}\left(X_{\Omega_1\setminus\Omega_0}\right).
\]
Since $\Omega_1\setminus\Omega_0$ can be chosen arbitrarily $C^0$-close to $T$, to complete the proof of the theorem, we just need to show that the Gromov width
\begin{equation}
\label{eqn:grt}
c_{\op{Gr}}(\op{int}(X_T)) \ge \min(am_- - n_-,\; n_+ - am_+).
\end{equation}

To prove \eqref{eqn:grt}, let $B$ denote the rectangle in the plane defined by $0\le x \le L/a$ and $0\le y \le L$. By definition, the points $(m_-,n_-)$ and $(m_+,n_+)$ are in $B$, but there are no lattice points in $B$ above the line $n_-x=m_-y$ and below the line $n_+x=m_+y$. In particular, the triangle with vertices $(0,0)$, $(m_-,n_-)$, and $(m_+,n_+)$ has no lattice points in its interior. Thus this triangle has area $1/2$ by Pick's theorem, so
\begin{equation}
\label{eqn:det}
m_-n_+ - m_+n_- = 1.
\end{equation}
This bit of number theoretic information is key for the rest of the calculation.

Using \eqref{eqn:det}, we compute that the vertex $p$ of the triangle $T$ is given by
\[
p = (n_+(am_--n_-), m_-(n_+-am_+)).
\]
By equation \eqref{eqn:det} again, the matrix
\[
A = \begin{pmatrix} -m_- & -n_- \\ -m_+ & -n_+ \end{pmatrix}
\]
is in $\op{SL}_2(\Z)$. Acting by $A$ on the triangle $T$, and then translating by $(am_-,n_+)$, yields the triangle with vertices
\[
(0,0), (am_--n_-,0), (0,n_+-am_+).
\]
Now the inequality \eqref{eqn:grt} follows by the Traynor trick, as reviewed in \S\ref{sec:toricprelim}. (In fact this is an equality by Gromov nonsqueezing \cite{gromov}.)
\end{proof}

\begin{remark}
Although we have shown that the closing bound \eqref{eqn:CloseGap} is sharp for the example of $\partial E(a,1)$, we do not expect that it is always sharp for contact three-manifolds. In particular, for the family of toric domains $X_{\Omega_\tau}$ constructed in the above proof, $\op{Gap}^L(\partial X_{\Omega_\tau})$ is independent of $\tau$, by the $C^0$-Continuity and Spectrality properties of $c_k$, as in the proof of the Spectral Gap Closing Bound. However it seems plausible that $\op{Close}^L(\partial X_{\Omega_\tau})$ gets smaller for large $\tau$.
\end{remark}

\subsection{More about thickened Reeb trajectories}
\label{sec:BoxClose}

\begin{proof}[Proof of Proposition~\ref{prop:BoxClose}.]
Let $\varphi:[0,L]\times D\to Y$ be a thickened Reeb trajectory of length $L$ and area $A$. Let $\epsilon>0$. It is enough to show that $\op{Close}^L(Y,\lambda)\ge A-\epsilon$. To do so, let $\mc{U}$ be the interior of the image of $\varphi$. We will show that there is a positive deformation $\{\lambda_\tau\}_{\tau\in[0,1]}$ of $\lambda$ supported in $\mc{U}$ such that:
\begin{description}
\item{(i)} For all $\tau\in[0,1]$, the contact form $\lambda_\tau$ does not have any Reeb orbit intersecting $\mc{U}$ with action $\le L$.
\item{(ii)} $\op{width}(\{\lambda_\tau\}) \ge A-\epsilon$.
\end{description}

To prepare for the construction of the positive deformation, fix a smooth function $\beta:[0,L]\to[0,1]$ such that $\beta(t)=0$ for $t$ close to $0$ or $L$, and $\beta(t)\equiv 1$ on a closed interval $I\subset (0,L)$ of length $L'\in(0,L)$.

Suppose that $g:[0,A/\pi]\to\R^{\ge 0}$ is a smooth function which does not vanish identically, such that $g'(x)\le 0$ and $g(x)=0$ for $x$ close to $A/\pi$. Consider the contact form $\lambda_g$ on $[0,L]\times D$ defined by
\[
\lambda_g = e^{\beta(t)g(r^2)}\left(dt + \frac{1}{2}r^2\,d\theta\right)
\]
where $t$ denotes the $[0,L]$ coordinate, and $(r,\theta)$ are polar coordinates on $D$. By construction, the contact form $\lambda_g$ agrees with $\varphi^*\lambda$ near the boundary of $[0,L]\times D$. We can now define a positive deformation $\{\lambda_\tau\}$ supported in $\mc{U}$ by defining $\lambda_\tau$ to agree with $\lambda$ outside of $\mc{U}$ and to satisfy $\varphi^*(\lambda_{\tau}) = \lambda_{\tau g}$. To complete the proof of the proposition, we want to show that $g$ can be chosen so that conditions (i) and (ii) above are satisfied.

A computation shows that the Reeb vector field associated to $\lambda_g$ is given by
\begin{equation}
\label{eqn:Rg}
R_g = e^{-\beta(t)g(r)}\left(
\left(1 + r^2 \beta(t) g'(r^2)\right)
\frac{\partial}{\partial t}
-
\frac{r\beta'(t)g(r^2)}{2}
\frac{\partial}{\partial r}
 - 2\beta(t)g'(r^2)
\frac{\partial}{\partial \theta}\right).
\end{equation}
Here only the $\partial/\partial t$ term is important for the discussion. In particular, henceforth let us require that $g$ satisfy the additional condition
\begin{equation}
\label{eqn:require}
1 + x g'(x) > 0.
\end{equation}
Then by \eqref{eqn:Rg}, the $\partial_t$ component of $R_g$ is always positive and less than or equal to $1$. Consequently:
\begin{itemize}
\item
$R_g$ has no periodic orbits in $[0,L]\times D$, and:
\item
Any trajectory of $R_g$ starting in $\{0\}\times D$ and ending in $\{L\}\times D$ has flow time at least $L$.
\end{itemize}
Since $\tau g$ also satisfies the requirements on $g$ for $\tau\in(0,1]$, it follows that condition (i) above is satisfied.

We now need to choose $g$ to also satisfy condition (ii) above.  To maximize the width of $\{\lambda_t\}$, we would like to choose $g$ as large as possible. The ``extreme case'' of \eqref{eqn:require} is where $1+xg'(x)=0$, so that
\[
e^{g(x)}= \frac{A}{\pi x}.
\]
Of course this is not allowed, since $g(0)$ needs to be finite and we need a strict equality in \eqref{eqn:require}, as well as $g(x)=0$ for $x$ close to $A/\pi$. However for any $\delta>0$, we can choose $g$ satisfying the requirements such that
\begin{equation}
\label{eqn:require2}
x > \delta \Longrightarrow e^{g(x)} > \frac{A-\epsilon}{\pi x}.
\end{equation}
We claim now that if $\delta>0$ is sufficiently small and \eqref{eqn:require2} holds, then condition (ii) is satisfied. More specifically we will show that if $\delta>0$ is sufficiently small and \eqref{eqn:require2} holds, then there is a symplectic embedding of $B^4(A-\epsilon)$ into $M_{\lambda_g}$.

The domain $M_{\lambda_g}$ contains a subset symplectomorphic to
\[
S = \left\{(s,t,r,\theta)\in\R\times I\times D^2 \;\big|\; (s,\varphi(t,r,\theta))\in M_{\lambda_g}\right\},
\]
with the symplectization symplectic form
\[
d\left(e^s\left(dt+\frac{1}{2}r^2\,d\theta\right)\right) = e^s\left(ds\,dt + \frac{1}{2}r^2\,ds\,d\theta + r\,dr\,d\theta\right).
\]
We will in fact show that if $\delta>0$ is sufficiently small and \eqref{eqn:require2} holds, then there is a symplectic embedding of $B^4(A-\epsilon)$ into $S$.

There is a symplectic embedding
\[
\psi: \R\times I\times D \longrightarrow \left(\C^2, r_1\,dr_1\,d\theta_1 + r_2\,dr_2\,d\theta_2\right)
\]
defined by
\[
\psi\left(s,t,r,\theta\right) = \left(re^{s/2+i\theta}, \sqrt{L/\pi}e^{s/2 + 2\pi i t/L}\right).
\]
The symplectomorphism $\psi$ maps the set $S$ to the set of $(r_1e^{i\theta_1},r_2e^{i\theta_2})\in\C^2$ such that:
\begin{itemize}
\item
$L\theta_2/(2\pi)\in I$.
\item
There exists $x\in[0,A/\pi]$ and $s\in(0,g(x))$ such that
\[
(\pi r_1^2,\pi r_2^2) = e^s (\pi r^2,  L).
\]
\end{itemize}
In particular, the set of possible values of $(\pi r_1^2, \pi r_2^2)$ is a union of line segments, where the set of line segments is parametrized by $x\in [0,A/\pi]$.  If \eqref{eqn:require2} holds, then this union of line segments includes all points in the rectangle
\[
[0,A-\epsilon] \times \left(L,\frac{L(A-\epsilon)}{\pi\delta}\right).
\]
Thus $S$ contains a region symplectomorphic to the symplectic Cartesian product of a closed disk of area $A-\epsilon$ and an open annulus of area
\[
L\left(\frac{A-\epsilon}{\pi\delta} - 1\right).
\]
If $\delta$ is small enough that the area of the annulus is greater than $A-\epsilon$, then $S$ contains a region symplectomorphic to the polydisk $P(A-\epsilon,A-\epsilon)$, i.e.\ the symplectic Cartesian product of two disks of area $A-\epsilon$, and the ball $B^4(A-\epsilon)$ is a subset of this polydisk.
\end{proof}


\section{Comparison with the ECH spectrum}
\label{sec:comparison}

We now describe a relation between the invariants $c_k$ and the ECH spectrum, and we use this relation to prove Theorems*~\ref{theoremstar:asymptotics} and \ref{theoremstar:qcl}.

\subsection{Review of the ECH spectrum}

If the contact form $\lambda$ is nondegenerate, then the {\em embedded contact homology\/} of $(Y,\lambda)$, denoted by $ECH(Y,\lambda)$, is the homology of a chain complex over $\Z/2$ freely generated by orbit sets $\alpha=\{(\alpha_i,m_i)\}$ which are ``admissible'', meaning that $m_i=1$ whenever $\alpha_i$ is hyperbolic. Here a Reeb orbit is called ``hyperbolic'' when its linearized return map has real eigenvalues. The differential on the chain complex counts $J$-holomorphic curves in $\R\times Y$ with ECH index $1$ for a generic almost complex structure $J$ on $\R\times Y$. See \cite{bn} for a detailed exposition.

Taubes \cite{taubes} has shown that if $Y$ is connected\footnote{If $Y$ is disconnected, then the ECH of $(Y,\lambda)$ is the tensor product of the ECH of its components. It follows from Taubes's isomorphism applied to each component that the ECH of $(Y,\lambda)$ is still an invariant of $(Y,\xi)$.}, then there is a canonical isomorphism between $ECH(Y,\lambda)$ and a version of Seiberg-Witten Floer homology as defined by Kronheimer-Mrowka \cite{km}. It follows from this isomorphism that $ECH(Y,\lambda)$ depends only\footnote{In fact, at this level of description, $ECH$ depends only on $Y$. However certain additional structure on it, such as a direct sum decomposition $ECH(Y,\xi) = \oplus_{\Gamma\in H_1(Y)} ECH(Y,\xi,\Gamma)$, depends also on $\xi$.} on the contact structure $\xi=\Ker(\lambda)$, and so can be denoted by $ECH(Y,\xi)$.

If $Y$ is connected, then there is a map
\[
U: ECH(Y,\xi) \longrightarrow ECH(Y,\xi).
\]
This is induced by a chain map which counts $J$-holomorphic currents of ECH index 2 passing through a base point in $\R\times Y$. It is shown in \cite{taubes5} that under Taubes's isomorphism, this agrees with a counterpart in Seiberg-Witten Floer theory.

For $L\in\R$, the {\em filtered ECH\/}, denoted by $ECH^L(Y,\lambda)$, is defined to be the homology of the subcomplex spanned by admissible orbit sets with symplectic action less than $L$. It is shown in \cite{cc2} that this does not depend on the choice of $J$, although it does depend on the contact form $\lambda$ and not just the contact structure $\xi$. Inclusion of chain complexes induces a map
\begin{equation}
\label{eqn:imathl}
\imath_L: ECH^L(Y,\lambda) \longrightarrow ECH(Y,\lambda) = ECH(Y,\xi),
\end{equation}
and it is shown in \cite{cc2} that this map also does not depend on the choice of $J$. 

Given a nonzero class $\sigma \in ECH(Y,\xi)$, there is an associated {\em ECH spectral invariant\/}
\[
c^{\op{ECH}}_\sigma(Y,\lambda) \in \R,
\]
originally defined in \cite{qech}, which is the infimum of $L$ such that $\sigma$ is in the image of the map \eqref{eqn:imathl}. So far we have been assuming that $\lambda$ is nondegenerate, but the spectral invariant $c^{\op{ECH}}_\sigma$ is $C^0$-continuous and so canonically extends in a $C^0$-continuous manner to the degenerate case.

Given $Y,\lambda,\sigma$, there exists an orbit set $\alpha$ such that $c^{\op{ECH}}_\sigma(Y,\lambda) = \mc{A}(\alpha)$. This follows from the definition when $\lambda$ is nondegenerate, and by a compactness argument when $\lambda$ is degenerate.

\subsection{The comparison theorem}

\begin{theoremstarstar}
\label{theoremstarstar:comparison}
Let $(Y,\lambda)$ be a closed connected contact three-manifold and let $k>0$. Then
\begin{equation}
\label{eqn:comparison}
c_k(Y,\lambda) \le \inf\left\{c^{\op{ECH}}_\sigma(Y,\lambda) \mid U^k\sigma\neq 0\right\}.
\end{equation}
\end{theoremstarstar}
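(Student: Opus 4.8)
The plan is to reduce \eqref{eqn:comparison} to the existence of a holomorphic curve through $k$ points in the completion of a trivial symplectic cobordism, and then to produce that curve from the Seiberg-Witten-theoretic construction of embedded contact homology together with its cobordism maps. Fix a nonzero class $\sigma\in ECH(Y,\xi)$ with $U^k\sigma\neq 0$; it suffices to prove $c_k(Y,\lambda)\le c^{\op{ECH}}_\sigma(Y,\lambda)$. Both sides are $C^0$-continuous in $\lambda$ --- the left by the $C^0$-Continuity property in Theorem~\ref{thm:basicproperties}, the right because $c^{\op{ECH}}_\sigma$ is $C^0$-continuous and perturbing $\lambda$ within its conformal class changes neither $\xi$ nor the pair $(ECH(Y,\xi),U)$ --- so we may assume $\lambda$ is nondegenerate. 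Fix $L>c^{\op{ECH}}_\sigma(Y,\lambda)$. By \eqref{eqn:cknondeg} it is enough to show $b_k([-R,0]\times Y)\le L$ for every $R>0$; so fix $R$ and write $X=[-R,0]\times Y$, an exact symplectic cobordism from $(Y,\lambda)$ to $(Y,e^{-R}\lambda)$ with completion $\overline{X}$. Since $X$ is exact we have $\rho\equiv 0$, so $\mc{E}_+(u)=\mc{A}(\alpha_+(u))$ for every $u\in\mc{M}^J(\overline{X})$, and unwinding the definition \eqref{eqn:defb} of $b_k$ it suffices to prove: for every $J\in\mc{J}(\overline{X})$ and all distinct points $x_1,\ldots,x_k\in X$, there exists $u\in\mc{M}^J(\overline{X};x_1,\ldots,x_k)$ with $\mc{A}(\alpha_+(u))\le L$.

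To produce such a $u$, assume first that $J$ is generic and that $x_1,\ldots,x_k$ are generic and lie in the interior of $X$. Using $L>c^{\op{ECH}}_\sigma(Y,\lambda)$, choose $\widetilde\sigma\in ECH^L(Y,\lambda)$ with $\imath_L\widetilde\sigma=\sigma$, where $\imath_L$ is the map \eqref{eqn:imathl}. Now invoke the cobordism-map formalism for filtered ECH of Hutchings and Taubes (resting on Taubes's isomorphism with Seiberg-Witten Floer homology; cf.\ \cite{cc2}): the exact cobordism $X$ induces a map $\Phi^L$ on filtered ECH which fits into a commuting square with the maps $\imath_L$ of $(Y,\lambda)$ and $(Y,e^{-R}\lambda)$ and, on total ECH, the identity map of $ECH(Y,\xi)$; moreover $\Phi^L$ commutes with the $U$-map and can be refined by inserting $k$ point constraints at $x_1,\ldots,x_k\in\op{int}(X)$, which amounts to composing with $U^k$ (the $U$-map base points being pushed from the symplectization ends into the cobordism). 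Chasing the commuting square, this decorated map $\widehat{\Phi}^L$ satisfies $\imath_L(\widehat{\Phi}^L\widetilde\sigma)=U^k\imath_L\widetilde\sigma=U^k\sigma\neq 0$, hence $\widehat{\Phi}^L\widetilde\sigma\neq 0$. The holomorphic curves axiom for (point-decorated) cobordism maps then guarantees that, for this $J$, the nonvanishing is witnessed by a broken $J$-holomorphic building in $\overline{X}$ passing through $x_1,\ldots,x_k$ whose top orbit set $\alpha$ is an ECH generator of $ECC^L(Y,\lambda)$, so $\mc{A}(\alpha)<L$. Since the $x_i$ lie in $\op{int}(X)$, the cobordism level of this building is a (possibly disconnected) curve $u\in\mc{M}^J(\overline{X};x_1,\ldots,x_k)$, and as the symplectization levels above it carry nonnegative energy, $\mc{A}(\alpha_+(u))\le\mc{A}(\alpha)<L$, as required.

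The general case --- arbitrary $J\in\mc{J}(\overline{X})$ and arbitrary distinct points $x_1,\ldots,x_k\in X$ --- follows by approximating $(J,x_1,\ldots,x_k)$ by generic data $(J_n,x_{1,n},\ldots,x_{k,n})$ with the $x_{i,n}$ in the interior, applying the previous paragraph to get curves $u_n$ with the uniform a priori bound $\mc{A}(\alpha_+(u_n))\le L$, and passing to a Gromov limit --- using the compactness argument of \cite[Lem.~3]{altech}, as in the proof of the Stripping property of Lemma~\ref{lem:bkproperties} --- to obtain a $J$-holomorphic curve through $x_1,\ldots,x_k$ with positive orbit set of action at most $L$. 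This is where the main difficulty lies: $b_k$ is a supremum over all $\omega$-compatible almost complex structures on the cobordism $X$ and all $k$-tuples of points, whereas the cobordism map, the $U$-map, and the holomorphic curves axiom are available only for generic almost complex structures (translation invariant on the symplectization ends) and generic point constraints --- which is precisely why one works with the completion of the trivial cobordism $[-R,0]\times Y$ rather than a bare symplectization of $(Y,\lambda)$, so that the interesting almost complex structures are permitted. Since all of this machinery currently requires Seiberg-Witten theory, which is moreover needed even to know that the right-hand side of \eqref{eqn:comparison} is well-defined, the theorem carries two asterisks.
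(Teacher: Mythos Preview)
Your proof is correct and follows essentially the same approach as the paper: reduce to nondegenerate $\lambda$, use the filtered ECH cobordism map from \cite{cc2} for $[-R,0]\times Y$ together with the identification of $U^k$ with $k$ point constraints (the paper cites \cite[Thm.~12]{altech} for this, packaging it as a chain-level property (*)), extract a holomorphic curve through $x_1,\ldots,x_k$ with positive action below $L$, and remove genericity hypotheses by Gromov compactness. The only cosmetic difference is that the paper treats the points $x_1,\ldots,x_k$ as arbitrary from the outset (requiring only $J_\pm$ generic), whereas you first take the points generic and interior and then approximate.
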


Note that for general $Y$, not all of the ECH spectral invariants appear on the right hand side of \eqref{eqn:comparison}; and the alternative spectral invariants $c_k(Y,\lambda)$ do not have to agree with any ECH spectral invariants.

\begin{proof}[Proof of Theorem**~\ref{theoremstarstar:comparison}.]
By $C^0$ continuity of the ECH spectrum, we can assume without loss of generality that $\lambda$ is nondegenerate.

To start, we discuss the map on ECH induced by the cobordism $[-R,0]\times Y$ for $R>0$. Let $J\in\mc{J}\left(\overline{[-R,0]\times Y}\right)$, and let $J_+$ and $J_-$ denote the induced almost complex structures on the symplectizations of $(Y,\lambda)$ and $(Y,e^{-R},\lambda)$. Suppose that $J_+$ and $J_-$ are generic so that we have well-defined ECH chain complexes $ECC(Y,\lambda,J_+)$ and $ECC(Y,e^{-R},J_-)$. By \cite[Thm.\ 1.9]{cc2}, if $L>0$, then the cobordism $[-R,0]\times Y$ induces a map
\[
\Phi: ECH^L(Y,\lambda) \longrightarrow ECH^L(Y,e^{-R},\lambda)
\]
such that
\begin{equation}
\label{eqn:imathls}
\imath_L\circ\Phi=\imath_L:ECH^L(Y,\lambda) \longrightarrow ECH(Y,\xi).
\end{equation}
Furthermore, similarly to the proof of \cite[Thm.\ 12]{altech}, for any distinct points $x_1,\ldots,x_k\in[-R,0]\times Y$, the map $\Phi\circ U^k$ is induced by a (noncanonical) chain map
\[
\phi: ECC^L(Y,\lambda,J_+) \longrightarrow ECC^L(Y,e^{-R}\lambda,J_-)
\]
with the following property:
\begin{description}
\item{(*)}
If $\alpha_+$ and $\alpha_-$ are admissible orbit sets such that the coefficient $\langle\phi\alpha_+,\alpha_-\rangle\neq 0$, then there exists an orbit set $\alpha_+'$ for $\lambda$ with $\mc{A}(\alpha_+)\ge \mc{A}(\alpha_+')$, an orbit set $\alpha_-'$ for $e^{-R}\lambda$ with $\mc{A}(\alpha_-')\ge \mc{A}(\alpha_-)$, and a holomorphic curve $u\in\mc{M}^J\left(\overline{[-R,0]\times Y},\alpha_+',\alpha_-';x_1,\ldots,x_k\right)$.
\end{description}

We now apply this setup to the matter at hand. Let $\sigma\in ECH(Y,\xi)$ and suppose that $U^k\sigma\neq 0$. We want to show that
\begin{equation}
\label{eqn:wwtst}
c_k(Y,\lambda) \le c^{\op{ECH}}_\sigma(Y,\lambda).
\end{equation}

Suppose that $c^{\op{ECH}}_\sigma(Y,\lambda) < L$. Let $J,x_1,\ldots,x_k,\phi$ be as above. Since $c^{\op{ECH}}_\sigma(Y,\lambda) < L$, the class $\sigma$ is represented by a cycle $\eta$ in $ECC^L(Y,\lambda,J_+)$. It follows from \eqref{eqn:imathls} that $\phi(\eta)$ represents the nonzero class $U^k\sigma$. Consequently there must exist generators $\alpha_+$ of $ECC^L(Y,\lambda,J_+)$ and $\alpha_-$ of $ECC^L(Y,e^{-R},J_-)$ with $\langle\phi\alpha_+,\alpha_-\rangle\neq 0$. By the property (*), we deduce that there exists $u\in \mc{M}^J\left(\overline{[-R,0]\times Y},\alpha_+',\alpha_-';x_1,\ldots,x_k\right)$ with $\mc{A}(\alpha_+')<L$. Since $L>c^{\op{ECH}}_\sigma(Y,\lambda)$ was arbitrary, we deduce that
\[
\inf_{u\in\mc{M}^J\left(\overline{[-R,0]\times Y};x_1,\ldots,x_k\right)}\mc{E}_+(u) \le c^{\op{ECH}}_\sigma(Y,\lambda).
\]

Since we started from any $J\in\mc{J}\left(\overline{[-R,0]\times Y}\right)$ with $J_\pm$ generic and any $x_1,\ldots,x_k\in[-R,0]\times Y$ distinct, it follows that
\[
\sup_{\substack{{\mbox{\scriptsize $J\in\mc{J}\left(\overline{[-R,0]\times Y}\right): J_\pm$ generic}}\\
 \mbox{\scriptsize $x_1,\ldots,x_k\in [-R,0]\times Y$ distinct}}}
\inf_{u\in\mc{M}^J\left(\overline{[-R,0]\times Y};x_1,\ldots,x_k\right)}\mc{E}_+(u) \le c^{\op{ECH}}_\sigma(Y,\lambda).
\]
The stipulation in the above inequality that $J_\pm$ are generic can be dropped by a Gromov compactness argument. We conclude that
\[
b_k([-R,0]\times Y) \le c^{\op{ECH}}_\sigma(Y,\lambda).
\]
Taking the supremum over $R>0$ proves the desired inequality \eqref{eqn:wwtst}.
\end{proof}

\subsection{Asymptotics of $c_k$}
\label{sec:asymptotics}

\begin{proof}[Proof of Theorem*~\ref{theoremstar:asymptotics}.]
As in \cite[Prop.\ 8.4]{qech}, we can assume without loss of generality that $Y$ is connected.

We recall that if $\Gamma\in H_1(Y)$, then $ECH(Y,\xi,\Gamma)$ is defined to be the homology of the subcomplex of the ECH chain complex generated by admissible orbit sets $\alpha=\{(\alpha_i,m_i)\}$ with total homology class $[\alpha]=\sum_im_i[\alpha_i]=\Gamma\in H_1(Y)$. One can choose a class $\Gamma$ such that $c_1(\xi) + 2\op{PD}(\Gamma)\in H^2(Y;\Z)$ is torsion. For such a class $\Gamma$, there is a relative $\Z$-grading on $ECH(Y,\xi,\Gamma)$; and it follows from results of Kronheimer-Mrowka \cite{km}, as explained for example in \cite[Lem.\ A.1]{infinity}, that in sufficiently large grading, $ECH(Y,\xi,\Gamma)$ is nonzero for gradings of at least one parity, and the $U$-map is an isomorphism. Consequently we can choose a ``$U$-sequence'', namely a sequence of nonzero homogeneous classes $\sigma_k\in ECH(Y,\xi,\Gamma)$ indexed by $k\ge 0$ such that $U\sigma_k = \sigma_{k-1}$ whenever $k>0$.

By \cite[Thm.\ 1.3]{vc}, if $\{\sigma_k\}$ is a $U$-sequence as above, then
\[
\lim_{k\to \infty}\frac{c^{\op{ECH}}_{\sigma_k}(Y,\lambda)^2}{k} = 2\op{vol}(Y,\lambda).
\]
By Theorem**~\ref{theoremstarstar:comparison}, we also know that
\[
c_k(Y,\lambda) \le c^{\op{ECH}}_{\sigma_k}(Y,\lambda).
\]
The above two lines imply that
\[
\limsup_{k\to\infty} \frac{c_{k}(Y,\lambda)^2}{k} \le 2\op{vol}(Y,\lambda).
\]
We are now done by the Asymptotic Lower Bound property of $c_k$.
\end{proof}

\subsection{Proof of the general quantitative closing lemma}
\label{sec:qcl}

\begin{proof}[Proof of Theorem*~\ref{theoremstar:qcl}.]
By scaling the contact form, we can assume without loss of generality that $\op{vol}(Y,\lambda)=1$. Below we write $c_k=c_k(Y,\lambda)$ and $\op{Gap}^L = \op{Gap}^L(Y,\lambda)$.

By the bound \eqref{eqn:CloseGap}, it is enough to show that
\begin{equation}
\label{eqn:limsup}
\limsup_{L\to\infty}\left(L\cdot\op{Gap}^L\right) \le 1.
\end{equation}
By Theorem*~\ref{theoremstar:asymptotics}, in proving \eqref{eqn:limsup}, we can restrict attention to numbers $L$ that are values of $c_k$. That is, it is enough to show that for every $\epsilon>0$, if $m$ is sufficiently large and $L=c_m$, then
\begin{equation}
\label{eqn:lgl}
L\cdot\op{Gap}^L \le 1+\epsilon.
\end{equation}

Let $\delta>0$. By Theorem~\ref{theoremstar:asymptotics}, if $m$ is sufficiently large, then
\begin{equation}
\label{eqn:msl}
k\ge m/2 \Longrightarrow
\left|c_k^2 - 2k\right| \le \delta k.
\end{equation}
Fix such $\delta$ and $m$. Let $n=am$ be an integer with $0<a<1/2$. Then for $L=c_m$, we have
\[
\begin{split}
\op{Gap}^L &\le \min_{i=1,\ldots,n}\left(c_{m-n+i} - c_{m-n+i-1}\right)\\
&\le \frac{1}{n}\left(c_m - c_{m-n}\right)\\
&\le \frac{1}{n}\left(\sqrt{(2+\delta)m} - \sqrt{(2-\delta)(m-n)}\right)\\
&= \frac{1}{a\sqrt{m}}\left(\sqrt{2+\delta} - \sqrt{(2-\delta)(1-a)}\right).
\end{split}
\]
Since $L\le\sqrt{(2+\delta)m}$, it follows that
\[
L\cdot \op{Gap}^L \le \frac{2+\delta-\sqrt{(4-\delta^2)(1-a)}}{a}.
\]

Let $f(a,\delta)$ denote the right hand side of the above inequality. For fixed $a>0$, we have
\[
\lim_{\delta\searrow 0} f(a,\delta) = \frac{2\left(1-\sqrt{1-a}\right)}{a}.
\]
Let $g(a)$ denote the right hand side of the above equation. Then we have
\[
\lim_{a\searrow 0}  g(a) = 1.
\]
Consequently, given $\epsilon>0$, to arrange \eqref{eqn:lgl}, we can first choose $a$ sufficiently small that $g(a)<1+\epsilon/3$, then choose $\delta$ sufficiently small that $f(a,\delta)<g(a)+\epsilon/3$, and then require that $m$ is sufficiently large that \eqref{eqn:msl} holds. Note that when $am$ is not an integer, we can fix this by slightly increasing $a$, and the $\epsilon/3$ of room that we have left will suffice when $m$ is sufficiently large.
\end{proof}

\subsection{Other quantitative closing bounds}
\label{sec:othergaps}

\begin{remark}
\label{rem:othergaps}
A similar argument to the proof of Theorem**~\ref{theoremstarstar:comparison} can be used to give a closing bound directly in terms of the ECH spectrum, namely
\begin{equation}
\label{eqn:ECHbound}
\op{Close}^L(Y,\lambda)\le\inf\left\{c^{\op{ECH}}_\sigma(Y,\lambda) - c^{\op{ECH}}_{U\sigma}(Y,\lambda) \;\big|\; U\sigma\neq0,\; c^{\op{ECH}}_\sigma(Y,\lambda)\le L\right\}.
\end{equation}
One can give alternate proofs of the general quantitative closing lemma in Theorem*~\ref{theoremstar:qcl}, as well as the more specific results in Theorem~\ref{thm:ballgap} and Theorem~\ref{thm:ellipsoid}, directly from the bound \eqref{eqn:ECHbound} together with known properties of the ECH spectrum.

In simple examples such as boundaries of convex or concave toric domains in $\R^4$, the bounds \eqref{eqn:CloseGap} and \eqref{eqn:ECHbound} are equivalent. In more general examples, in principle either bound might be stronger than the other. 
\end{remark}


\end{document}